\newtheorem{theorem}{Theorem}
\newtheorem{lemma}{Lemma}
\newtheorem{prop}{Proposition}
\newtheorem{cor}{Corollary}
\theoremstyle{plain}
\newtheorem{definition}[theorem]{Definition}
\newtheorem{remark}[theorem]{Remark}
\newtheorem{example}[theorem]{Example}
\begin{document}

\keywords{non-local operator, jump process, derivation property, Liouville property}
\subjclass[2000]{Primary {\bf 42B20}; Secondary {\bf 47G20, 31B05, 31C25}}

% ---- command abbrev.----

\newcommand{\dis}{\displaystyle}
\newcommand{\form}{{\mathcal E}}
\newcommand{\dom}{{\mathcal F}}
\newcommand{\bu}{\bullet}
\newcommand{\vareps}{\varepsilon}
\newcommand{\al}{{\alpha}}

\title{$L^p$-Liouville property for non-local operators}

\author{Jun Masamune}

\address{Department of 
Mathematics and Statistics,
Penn State Altoona,
3000 Ivyside Road, Altoona PA 16601, USA}

\email{jum35@psu.edu and t-uemura@kansai-u.ac.jp} 

\author{Toshihiro Uemura}

\address{Department of Mathematics, Faculty of Engineering Science,
Kansai University, Suita,Osaka 564-8680, Japan}

\email{t-uemura@kansai-u.ac.jp} 

\begin{abstract}
The $L^p$-Liouville property of a non-local operator $\mathcal{A}$
is investigated via the associated Dirichlet form $(\form,\dom)$.
We will show that any non-negative continuous $\form$-subharmonic function 
$f \in \dom_{\rm loc} \cap L^p$ are constant under a quite mild assumption 
on the kernel of $\form$ if $p \ge 2$. 
On the contrary, if $1<p<2$, we need an additional assumption: 
either, the kernel has compact support; or $f$ is H\"older continuous
\end{abstract}

\date{}
\maketitle{}

\section{Introduction}

The original Liouville theorem says that any bounded harmonic functions on 
an Euclidean space is identically constant. 
The study of the %insert "the" 
Liouville property was boosted by the 
celebrated $L^p$-Liouville theorems on Riemannian manifolds
by Andreotti-Vezentini \cite{AV65} and 
Yau \cite{Y76}, and since then, 
many generalizations and various types of Liouville properties have been 
considered (\cite{N85, T03, M05, PRS05}, etc and the references within). 
The problem of the %insert "the"
Liouville property can be formulated via the Dirichlet integral; 
namely, to find the conditions %insert "s"
such that a %erase: the, insert "a"
function $f$ satisfying
\begin{equation} \label{Eq;1.1}
(\nabla f, \nabla \psi) \le 0\mbox{ for all $\psi \in C^\infty_K$ with 
$\psi \ge 0$}
\end{equation}
is constant. 

A general frame work, which generalizes the classical Dirichlet integral on
%erase: of, insert "on" 
a Riemannian manifold, uses Dirichlet form theory.  
Biroli and Mosco \cite{BM95} defined the subharmonic functions 
associated to a Dirichlet form $\form$, which we call \emph{$\form$-subharmonic} 
by replacing the Dirichlet integral in (\ref{Eq;1.1}) by $\form$ (see 
Definition \ref{defi;subharmonic} in Section \ref{DPSFO}). 
Subsequently, Sturm \cite{S94} generalized the $L^p$-Liouville properties 
of Riemannian manifolds \cite{AV65, Y76} 
to a general $\form$-subharmonic function of a (strong) 
local Dirichlet form. 

The %erase: theory of 
$L^p$-Liouville property for %erase: a
local operators %insert "s" 
has been successfully developed. However, as far as the authors %erase: are concerned, 
know, %insert "know"
there are no results about the %erase: of, inserts "about the" 
$L^p$-Liouville property for %erase: a 
\emph{non-local operators}. %insert "s" 
Of course, %erase a 
non-local operators %insert "s"
%is one of the most important operators which 
appear naturally in many areas in mathematics; such as the theories 
of pseudo-differential operators, stochastic processes (in particular, jump process), 
Dirichlet forms (see \cite{S70, FOT94, J01, JS01, B04, L72}, and the references therein).
%erase and it is a quite active area, for example,
%recently, Barlow et.\ al.\ \cite{BBCK09} showed that
%a harmonic function associated to a non-local operator does not need to be continuous.

%Motivated by these facts, 
In this article, we are interested in the $L^p$-Liouville property of a non-local 
operator $\mathcal{A}$ on the Euclidean space ${\mathbf{R}}^d$, given 
by %insert ", given by"
\[
{\mathcal A} f (x) = \int_{y\not=x} \left(f(y) - f(x) - \nabla 
f (x)  (y-x) 1_{|x-y|<1}\right) \mu(x,dy), \ \ x \in {\mathbf{R}}^d,
\]
where $\mu(x,dy)$ is a (jump) kernel on ${\mathbf{R}}^d\times{\mathcal B}({\mathbf{R}}^d)$; 
namely, $B \mapsto \mu(x,B)$ is a positive measure  on 
${\mathcal B}({\mathbf{R}}^d\backslash \{x\})$ for each fixed $x \in {\mathbf{R}}^d$ and 
$x \mapsto \mu(x, B)$ is a Borel measurable function for every $B \in 
{\mathcal B}({\mathbf{R}}^d \backslash \{x\})$.  
Let $m(dx)$ be a positive Radon measure on ${\mathbf{R}}^d$ with full support such 
that $\mu(x,dy)m(dx)$ is symmetric; that is, $\mu(x,dy)m(dx)=\mu(y,dx)m(dy)$.

Let $C^{\rm lip}_K({\mathbf{R}}^d)$ be the space of all uniformly Lipschitz continuous 
functions on ${\mathbf{R}}^d$ with compact support. We define a quadratic form 
$\form$ as
\[ \dis{
\form (f,g)=\iint_{x \neq y} \bigl(f(x)-f(y)\bigr) \bigl(g(x)-g(y)\bigr) 
\mu(x,dy)m(dx) }
\]
for $f,g \in C^{\rm lip}_K({\mathbf{R}}^d)$. Since $(\form,C^{\rm lip}_K({\mathbf{R}}^d))$ 
is closable (Lemma \ref{Lem;2.1} and \cite{U07b}),  its closure $(\form, \dom)$ 
is a regular \emph{non-local} symmetric Dirichlet form, and therefore, 
there exists the associated symmetric Hunt process of \emph{pure jump type}
by Fukushima theorem.
We will investigate the $L^p$-Liouville property of $\mathcal{A}$ via the 
related non-local Dirichlet form $(\form,\dom)$.

Throughout the paper we make %erase: accept, insert "make" 
the following assumptions:
\begin{enumerate}
\item[(i)] There exist $R>3r>0$ such that 
$\mu(x, B(x,R)\backslash B(x,r))>0$ for every %erase: with any, insert "for every" 
$x\in {\mathbf{R}}^d$.
\item[(ii)] \[
\dis{ M_\alpha = \sup_{x \in \mathbf{R}^d} \int_{y \neq x} 
\left( 1 \wedge |x-y|^\alpha \right) \mu(x,dy)<\infty}
\]
for some $0 \le \alpha <2$ depending on each settings of  the problems.
\end{enumerate}
Since the inequalities $|x-y|^2\le |x-y|^{\beta} \le |x-y|^{\alpha}$ hold 
for $|x-y|<1$ and $\alpha\le\beta\le 2$, we note that  $M_2 \le M_{\beta} 
\le M_{\alpha}$ for $\alpha \le \beta\le 2$. %insert some sentence 
These assumptions are so general that 
they are satisfied by most of the typical examples; 
for instance, symmetric $\alpha$-stable processes, symmetric 
stable-like processes, and many other %insert "many"
symmetric L\'evy processes, which we will present in Section \ref{E}. 

\smallskip
The principal purpose of the present article is to show
the following $L^p$-Liouville property of a non-local operator:

\bigskip
\noindent
%%%%%%%%%%% MAIN THEOREM %%%%%%%%%%%%%%%%%%
%%%%%%%%%%% MAIN THEOREM %%%%%%%%%%%%%%%%%%
%%%%%%%%%%% MAIN THEOREM %%%%%%%%%%%%%%%%%%
{\bf Main theorem (Theorem \ref{MT1} and Theorem \ref{MT2})}
{\it Assume one of the following conditions:
\begin{enumerate}
\item[{\rm (i)}] $2\le p<\infty$ and $M_q<\infty$ with the conjugate number 
$q$ of $p$; that is, $1/p+1/q=1$, or
\item[{\rm (ii)}] $1<p<2$, $M_1<\infty$, and for some $R>0$, 
the measure $\mu(x, \bullet)$ is supported in the ball $B(x,R)$ for every 
$x \in {\mathbf{R}}^d$.
\end{enumerate}
Then any non-negative $\form$-subharmonic function 
$f \in C \cap \mathcal{F}_{\rm loc} \cap L^p$ is identically constant.}

\bigskip
Here, $\dom_{\rm loc}$ is the space of measurable functions $f$
such that for any compact set $K \subset \mathbf{R}^n$
there exists $g \in \dom$ with $f=g \ m$-a.e. on $K$ (see \S 2).
We stress that the %insert "the"
$L^2$-Liouville property always holds for $M_2<\infty$.

Let us compare the Main result with the classical 
$L^p$-Liouville property of local operators \cite{S94}.
Recall that the associated process to a local operator
is diffusion and it has not jumps. 
%Comparing with %erase: to, insert "with" 
%the corresponding results for local operators \cite{S94}, whose associated 
%processes have no jumps, 

The significant difference occurs when 
$1<p<2$, and %when the associated process has both big and small jumps.
%Indeed, 
in this case, we need an additional assumption on the measure $\mu(x, dy)$.
Let us explain why this happens. Roughly speaking, if $p \ge2$ 
we may conduct the proof as in the classical proofs for the local operator by 
developing and applying some techniques for a non-local operator.
The key is to use $\chi f^{p-1}$, where $\chi$ is a cut-off function, 
as a test function.
On the other hand, if $p<2$, then $\chi (f + \epsilon)^{p-1}$ with $\epsilon>0$ 
can be used as a test function for a local operator;
however, this is not true for a non-local operator. 
As we have stated above, the construction of the test function is the key in the proof.
To overcome this difficulty, we will demonstrate that 
under the assumption such that the support $\mu(x, dy)$ is in $B(x,R)$ with (uniform) 
$R>0$ for every $x \in {\mathbf{R}}^d$; for instance, the truncated kernel 
${\bf{1}}_{|x-y|\le R} \mu(x,dy)$, then $\form (f, \chi (f + \epsilon)^{p-1})$ 
makes sense. In this way, we can continue and complete the proof for $1<p<2$.
Recall that this assumption means that the associated process has only small 
jumps, and in this sense, it looks more similar to a diffusion. %insert "s" 

We will discuss another way to ensure the convergence of $\form (f, \chi  f^{p-1})$ with 
$1<p<2$; that is, is to restrict the harmonic functions to the class of H\"older continuous functions. 
Indeed, we will show that $L^p$-Liouville property with $1<p<2$ 
in the class of H\"older continuous %insert "continuous"
harmonic functions holds true for $\mu$ which has both small and big jumps:

%%%%%%% MAIN THEOREM %%%%%%%%%%%%%%%%%%%%%%%%%%%%%
%%%%%%% MAIN THEOREM %%%%%%%%%%%%%%%%%%%%%%%%%%%%%
%%%%%%% MAIN THEOREM %%%%%%%%%%%%%%%%%%%%%%%%%%%%%
\begin{theorem}[Theorem \ref{MT2}] \label{theorem;1.1} Assume $1<p<2$ and 
$M_1<\infty$. Let $f$ be a non-negative $\form$-subharmonic function 
which is H\"older continuous  %insert "which -- continuous"
with %insert "H\"older" 
H\"older exponent $\gamma$ with $1/p \le \gamma \le1$. In addition, 
if $f$ belongs to $C^{\gamma} \cap L^p$, then it is identically constant. 
\end{theorem}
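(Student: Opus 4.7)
The plan is to adapt the classical Yau--Sturm $L^p$-Liouville strategy to the non-local setting: insert the test function $\phi_\chi:=\chi f^{p-1}$ into the subharmonicity inequality $\form(f,\phi_\chi)\le 0$, where $\chi$ is a non-negative Lipschitz cutoff of compact support, and derive a Caccioppoli-type bound from which an exhaustion argument in $\chi$ forces $f$ to be constant. Because $p-1<1$ the Lipschitz chain-rule manipulations that succeed for $p\ge 2$ are unavailable for a non-local form, and the H\"older hypothesis on $f$ is precisely what keeps the pairing admissible.

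The first and principal technical step is to verify that $\phi_\chi$ lies in $\dom$, so that $\form(f,\phi_\chi)$ is finite. Splitting
\[
\chi(x)f(x)^{p-1}-\chi(y)f(y)^{p-1}=\chi(x)\bigl(f(x)^{p-1}-f(y)^{p-1}\bigr)+\bigl(\chi(x)-\chi(y)\bigr)f(y)^{p-1},
\]
and exploiting the elementary inequality $|a^{p-1}-b^{p-1}|\le|a-b|^{p-1}$ valid for $a,b\ge 0$ and $0<p-1<1$, the $\gamma$-H\"older bound on $f$ controls the short-range part of the integrand by a multiple of $|x-y|^{\gamma p}$, which is $\mu(x,dy)m(dx)$-integrable exactly because $\gamma p\ge 1$ and $M_1<\infty$. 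The long-range contribution is handled via the symmetry of $\mu(x,dy)m(dx)$, the $L^p$-bound on $f$, and the moment bound $\sup_x\mu(x,B(x,1)^c)\le M_1$, together with elementary inequalities such as $(a-b)(a^{p-1}-b^{p-1})\le a^p+b^p$ and $(a+b)(a^{p-1}+b^{p-1})\le 2(a^p+b^p)$.

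Once admissibility is secured, symmetrizing $\form(f,\phi_\chi)\le 0$ in $(x,y)$ produces a main symmetric part of the form $(\chi(x)+\chi(y))(f(x)-f(y))(f(x)^{p-1}-f(y)^{p-1})$, bounded from below via the Stroock--Varopoulos inequality $(a-b)(a^{p-1}-b^{p-1})\ge\frac{4(p-1)}{p^2}(a^{p/2}-b^{p/2})^2$ by a positive multiple of $(\chi(x)+\chi(y))\bigl(f^{p/2}(x)-f^{p/2}(y)\bigr)^2$, plus a boundary remainder of the form $(\chi(x)-\chi(y))(f(x)-f(y))(f(x)^{p-1}+f(y)^{p-1})$. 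Taking $\chi_n\uparrow 1$ with $\chi_n=1$ on $B(0,n)$, supported in $B(0,2n)$, and $\|\nabla\chi_n\|_\infty\le 1/n$, the remainder is then bounded by a multiple of $\iint|\chi_n(x)-\chi_n(y)|(f(x)^p+f(y)^p)\mu(x,dy)m(dx)$, whose short-range piece is of order $(C/n)M_1\|f\|_p^p\to 0$, and whose long-range piece is nonzero only when one of $x,y$ lies in $B(0,n)^c$ and vanishes by dominated convergence using $f\in L^p$ and $\sup_x\mu(x,B(x,1)^c)\le M_1$. Passing to the limit via Fatou yields $\iint\bigl(f^{p/2}(x)-f^{p/2}(y)\bigr)^2\mu(x,dy)m(dx)=0$, so the irreducibility condition (i) forces $f^{p/2}$, and hence $f$, to be constant. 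The crucial obstacle is the admissibility step: the threshold $\gamma\ge 1/p$ is exactly where the non-local pairing $\form(f,\chi f^{p-1})$ first makes sense under $M_1<\infty$, which is why no stronger regularity of $f$ nor additional moment condition on $\mu$ is imposed.
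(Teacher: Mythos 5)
Your argument is essentially the paper's own proof of Theorem \ref{MT2}: the same test function $\chi_n f^{p-1}$ (admissible precisely because $\gamma+(p-1)\gamma=\gamma p\ge 1$ and $M_1<\infty$), the same splitting of $\form(f,\chi_n f^{p-1})$ into a non-negative energy term plus a cutoff remainder dominated by a constant times $\iint f^p(x)\,|\chi_n(x)-\chi_n(y)|\,\mu(x,dy)m(dx)\to 0$, and the same conclusion from assumption (i) and continuity. The only cosmetic differences are that you carry out the symmetrization in $(x,y)$ inline (which is exactly the content of the paper's integral derivation property, Corollary \ref{cor:deri}) and invoke the Stroock--Varopoulos inequality where the paper only needs the non-negativity of $(a-b)(a^{p-1}-b^{p-1})$.
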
 

Therefore, in (ii) in the Main theorem, the corresponding process has only small 
jumps, and the class of subharmonic functions is the considerable large one; 
and in Theorem \ref{theorem;1.1}, the functions are H\"older continuous, 
but there are no assumptions on the kernel, 
i.e.\ the associated process may have both small and big jumps.

In the technical contribution, we consider the following. One of the most 
essential ingredients in the proof  is the derivation property.
Clearly, in general, a non-local operator does not satisfy the 
derivation property, however, we establish the following integral version, 
which is interesting by its own and is sufficient 
for our purpose: 

%%%%%%% MAIN THEOREM %%%%%%%%%%%%%%%%%%%%%%%%%%%%%
%%%%%%% MAIN THEOREM %%%%%%%%%%%%%%%%%%%%%%%%%%%%%
\begin{theorem}[Integral Derivation Property: Proposition \ref{prop:deri}]
Let $1 \le p<\infty$ and $q$ be its  conjugate; that is, $1/p + 1/q =1$. 
If $M_{\alpha} < \infty$ with some $0<\alpha\le2$, then
\[
\int \Gamma (f,gh)(x)\, m(dx)= \int g(x) \Gamma(f,h)(x)\, m(dx) + \int h(x) 
\Gamma (f,g)(x)\, m(dx)
\]
for all $f \in \dom_{\rm loc} \cap L^p$, $g\in \dom_{\rm loc} \cap L^q$, and 
$h \in C_K^{\rm lip}$.  Here 
\[
\Gamma (f,g) (x) = \int_{ y \neq x} \bigl(f(x)-f(y)\bigr)
\bigl(g(x)-g(y)\bigr) \mu(x,dy) %erase: k(x,y) \, m(dy), insert "\mu(x,dy)"
\quad  x \in {\mathbf{R}}^d.
\]
\end{theorem}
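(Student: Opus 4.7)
The proof plan rests on the elementary symmetric decomposition of a product difference,
\[
g(x)h(x) - g(y)h(y) = \tfrac{1}{2}\bigl[(g(x)+g(y))(h(x)-h(y)) + (h(x)+h(y))(g(x)-g(y))\bigr],
\]
which one checks by direct expansion. Multiplying by $(f(x)-f(y))$ and integrating against the symmetric measure $\mu(x,dy)m(dx)$ gives, purely formally,
\[
\int \Gamma(f,gh)(x)\, m(dx) \;=\; \tfrac{1}{2} I_1 + \tfrac{1}{2} I_2,
\]
where $I_1$ and $I_2$ are the corresponding double integrals built from the two terms on the right above. The key observation is that the symmetry $\mu(x,dy)m(dx) = \mu(y,dx)m(dy)$ together with the $x \leftrightarrow y$ swap sends $(f(x)-f(y))(h(x)-h(y))$ to itself (two sign flips), while it interchanges $g(x)$ and $g(y)$. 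Averaging therefore gives
\[
\int g(x)\,\Gamma(f,h)(x)\, m(dx) \;=\; \tfrac{1}{2} I_1,
\]
and an identical argument on the $h$-term yields $\int h(x)\,\Gamma(f,g)(x)\, m(dx) = \tfrac{1}{2} I_2$. Combining these identities proves the claim at the algebraic level.

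The substantive content of the proof is to justify that each of the four integrals involved converges absolutely, so that Fubini and the $x \leftrightarrow y$ symmetrization are legitimate. Writing $K := \mathrm{supp}(h)$, the compact support of $h$ localizes at least one variable to $K$ in each term. Using $f \in \dom_{\rm loc}$ and $g \in \dom_{\rm loc}$, I would choose $\tilde f, \tilde g \in \dom$ that agree with $f,g$ on a fixed open neighborhood $K'$ of $K$ and split each double integral according to the small-jump region $\{|x-y|<1\}$ and the large-jump region $\{|x-y|\ge 1\}$. On the small-jump region, whenever $x\in K$ the replacement $f \to \tilde f$, $g \to \tilde g$ is valid (and likewise after the $x \leftrightarrow y$ swap); combining Cauchy--Schwarz with the Lipschitz bound $|h(x)-h(y)|\le \mathrm{Lip}(h)\,(1\wedge|x-y|)$ and the hypothesis $M_\alpha<\infty$ (which controls $\int (1\wedge|x-y|^{\alpha})\mu(x,dy)$ and hence, via $1\wedge|x-y|^2 \le 1\wedge|x-y|^\alpha$, the quadratic integrand) yields finiteness from $\form(\tilde f)^{1/2}\form(\tilde g)^{1/2}\|h\|_{\mathrm{Lip}} < \infty$.

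On the large-jump region, I would expand $(f(x)-f(y))(g(x)-g(y))$ into the four cross-terms $f(x)g(x)$, $f(x)g(y)$, $f(y)g(x)$, $f(y)g(y)$, use $\mu(x,\{|x-y|\ge 1\}) \le M_\alpha$ uniformly in $x$, and apply Hölder in the exponents $p,q$ together with $f\in L^p(m)$, $g\in L^q(m)$, and the localization provided by $h$ or $\mathbf 1_K$ (transported to the other variable, when needed, via the symmetry of $\mu(x,dy)m(dx)$). The main obstacle is precisely this bookkeeping of cross-terms in the large-jump regime: one must pair each factor with the correct $L^p$ or $L^q$ norm, and the compact-support localization has to be moved between the two variables using the symmetry of the kernel. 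Once absolute convergence of all four integrals is established, the symmetrization argument in the first paragraph applies without modification and yields the stated integration-by-parts identity.
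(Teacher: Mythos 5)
Your proposal is correct and follows essentially the same route as the paper: a Leibniz-type splitting of $g(x)h(x)-g(y)h(y)$ combined with the symmetry $\mu(x,dy)m(dx)=\mu(y,dx)m(dy)$ under the swap $x\leftrightarrow y$ to obtain the identity, and then absolute convergence of each integral via a near-diagonal/large-jump split, Cauchy--Schwarz with local $\dom$-representatives and the Lipschitz bound on $h$ on the near part, and H\"older with $M_\alpha<\infty$, $f\in L^p$, $g\in L^q$ on the far part. The only (cosmetic) difference is that you use the symmetrized product-difference formula and average, whereas the paper uses the asymmetric decomposition $g(x)\bigl(h(x)-h(y)\bigr)+\bigl(g(x)-g(y)\bigr)h(y)$ and swaps variables in the second term.
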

A weaker version of this result can be found in \cite{CKS87} that is 
not sufficient to prove our Main theorem (see the remark in Section 
\ref{DPSFO}). 

\smallskip
Since our results lie in the intersection of Analysis, Potential theory,
and Stochastic Analysis, it is desirable to characterize the 
$\form$-superharmonic function as a potential or as an excessive function. 
Let us briefly discuss these relationships (see the remark in Section 
\ref{DPSFO}). It can be shown that $f$ is $\form$-superharmonic if and only 
if it is a potential, and it is known that if additionally $f$ belongs to the 
domain of the Dirichlet form, then $f$ is $\form$-superharmonic if and only 
if it is excessive \cite{FOT94}. In particular, these three conditions are 
equivalent if $\form$ is local \cite{S94}, which is still unknown for a 
non-local case. Moreover, the equivalence of the $L^\infty$-Liouville property 
and the recurrence property of the associated Markov process is known for a 
(strong) local Dirichlet form  (\cite{S94, K00}), 
which is still open for a non-local case.

Finally, let us point out that in order to avoid the technical complications, 
we restrict ourselves to the Euclidean space, but our results extend with 
obvious modifications to a locally compact metric measure space $X$, 
provided that %the distance generates the original topology of $X$ (as in 
%\cite{BM95, S94}) and 
any ball is relatively compact. 
This space $X$ includes a complete Riemannian manifold, a complete 
sub-Riemannian manifold, and a complete weighted manifold as examples.
Therefore, our results may depend on the metric structure but not on
the topological nor the volume growth of the underlying space.
This forms a strong contrast with the conservation property of 
a jump process \cite{MU}, which heavily depends on the volume growth.

We organize the article as follows: In Section \ref{DPSFO}, we recall some 
preliminary results. In Section \ref{Section;IDP}, we prove the integral 
derivation property (Proposition \ref{prop:deri}) and its Corollaries.
We prove the $L^p$-Liouville property in Section \ref{LP} for $p\ge 2$, 
and for $1<p<2$ in Section \ref{Section;LPII}. Finally, in Section \ref{E}, 
we present some examples.

%%%%% SECTION %%%%%%%%%%%%%%
%%%%% SECTION %%%%%%%%%%%%%%
%%%%% SECTION %%%%%%%%%%%%%%
\section{Preliminaries} \label{DPSFO}
This section contains preliminary results. 
Let us first establish some notations.

Denote by $L^0({\mathbf{R}}^d;m)=L^0({\mathbf{R}}^d)$ the space of all 
measurable functions on ${\mathbf{R}}^d$.  %insert this sentence
If ${\mathcal K} \subset L^0({\mathbf{R}}^d)$, then ${\mathcal K}_K$ 
denotes the space of all functions $f$ in ${\mathcal K}$ 
with compact support. Set ${\mathcal K}_{K,p}={\mathcal K}_K \cap 
L^p({\mathbf{R}}^d;m)$ for $1\le p\le \infty$. 
We say that $f$ is locally in $\dom$ ($ u \in \dom_{\rm loc}$ in notation)
if for any relative compact set $O$ there exists a function $g \in \dom$ such that
$f=g$ $m$-a.e.\ on $O$.
%change the sentence $\dom_{\rm loc}$ is the space of functions $f$ in $L^0(\mathbf{R}^d)$ 
%change the sentence so that, there exists $f_K \in \dom$ for each compact set $K$ such that 
%change the sentence $f_K (x)= f(x)\ m$-a.e.\ on $K$. %change the sentence
%such that there exists $f_K \in \dom$ with $f_K (x)= f(x)\ m$-a.e.\ on 
%every compact set $K \subset {\mathbf{R}}^d$.
We shall often suppress ${\mathbf{R}}^d$ and $m$ and simply refer to
$C$, $L^p$, etc as $C({\mathbf{R}}^d)$, $L^p({\mathbf{R}}^d;m)$, etc, respectively. 

Since $\form$-subharmonic functions are %erase: is, insert "are" 
defined in a weak sense, it is crucial to find the suitable class of tests functions. 
In fact, $\form (f, g)$ may diverge for $f \in \dom_{\rm loc}$ and $g \in \dom \cap C_K$, 
so this problem is not trivial like the local case. 
In Lemma \ref{Lem;2.2}, we will show that  in addition, if $f$ belongs to $L^p$, then 
$\form (f,g) < \infty$.

\smallskip
Let us recall
%%%%%%%%%%%%%%%%%%%%%
%%%%%   Lemma   %%%%%
%%%%%%%%%%%%%%%%%%%%%
\begin{lemma}[Example 1.2.4 \cite{FOT94} and \cite{U04}] \label{Lem;2.1}
If $M_2 < \infty$, then %${\mathcal D}[\form]$ contains 
%$C_0^{\rm lip}=C_0^{\rm lip}({\mathbf{R}}^d)$, the set of all uniformly Lipschitz 
%continuous functions with compact support. Moreover 
$(\form,C_K^{\rm lip})$ is a closable Markovian form on $L^2$. Therefore, there exists a 
symmetric Hunt process associated to the Dirichlet form $(\form,\dom)$.
\label{lemma:regularity}
\end{lemma}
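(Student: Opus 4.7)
The plan is to establish three properties of $(\form, C_K^{\rm lip}({\mathbf{R}}^d))$ on $L^2$: finiteness, the Markovian property, and closability. Once these are in place, $C_K^{\rm lip}$ is automatically $\form_1$-dense in its closure $\dom$ and uniformly dense in $C_K$, so $(\form, \dom)$ is a regular Dirichlet form, and Fukushima's construction (Theorem 7.2.1 of \cite{FOT94}) supplies the associated symmetric Hunt process.

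To see finiteness, fix $f \in C_K^{\rm lip}$ with Lipschitz constant $L$ and support in a compact set $K$, and split the double integral by the dichotomy $|x-y|<1$ versus $|x-y|\ge 1$. On the near-diagonal piece, the Lipschitz bound gives $(f(x)-f(y))^2 \le L^2(1\wedge |x-y|^2)$, and since the integrand vanishes unless at least one of $x,y$ lies in the $1$-neighborhood $K_1$ of $K$, the symmetry of $\mu(x,dy)m(dx)$ controls this piece by $2L^2 M_2\, m(K_1)$. On the far-diagonal piece the bound $(f(x)-f(y))^2 \le 2f(x)^2+2f(y)^2$ combined with symmetry reduces the estimate to $4\|f\|_\infty^2 \int_K \mu(x,B(x,1)^c)\,m(dx)$, which is finite because $1\wedge|x-y|^2 \equiv 1$ on $\{|x-y|\ge 1\}$ forces $\mu(x,B(x,1)^c)\le M_2$. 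The Markovian property is automatic: every normal contraction $T$ satisfies $(Tf(x)-Tf(y))^2 \le (f(x)-f(y))^2$ pointwise, so $\form(Tf,Tf)\le\form(f,f)$.

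The main obstacle is closability. Suppose $\{f_n\}\subset C_K^{\rm lip}$ with $f_n\to 0$ in $L^2$ and $\form(f_n-f_m,f_n-f_m)\to 0$. After extracting a subsequence I may assume $f_n\to 0$ $m$-almost everywhere, and, writing $J(dx,dy):=\mu(x,dy)m(dx)$ for the symmetric jumping measure on $\{x\ne y\}$, the symmetry of $J$ propagates an $m$-null set in either coordinate to a $J$-null set, so $(f_n(x)-f_n(y))^2\to 0$ for $J$-a.e.\ $(x,y)$. Fixing $k$ and applying Fatou's lemma to the nonnegative integrand $(f_\ell(x)-f_\ell(y)-(f_k(x)-f_k(y)))^2$ along $\ell\to\infty$ yields
\[
\form(f_k,f_k) \le \liminf_{\ell\to\infty} \form(f_\ell-f_k,f_\ell-f_k),
\]
and the Cauchy hypothesis makes the right-hand side arbitrarily small for $k$ large; hence $\form(f_k,f_k)\to 0$. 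This step is where the symmetry of $\mu(x,dy)m(dx)$ is genuinely used; everything else is bookkeeping against the bound $M_2<\infty$.
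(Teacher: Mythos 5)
Your proof is correct and follows essentially the same route as the paper, which does not argue the lemma itself but cites Example~1.2.4 of \cite{FOT94} and \cite{U04}: the standard argument there is exactly your combination of the $M_2$-bound for finiteness on $C_K^{\rm lip}$, pointwise domination under normal contractions, and the Fatou/symmetry argument (an $m$-null set in either coordinate is $\mu(x,dy)m(dx)$-null) for closability. The only cosmetic remark is that your Fatou step gives $\form(f_k,f_k)\to 0$ along the a.e.-convergent subsequence, and one passes to the full sequence by the triangle inequality for the seminorm $\form(\cdot,\cdot)^{1/2}$ together with the Cauchy hypothesis.
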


\bigskip
Henceforth we assume $M_{\alpha} < \infty$ for some $0<\alpha \le 2$.
In the next lemma, we extend $\form$ to 
$\dom_{{\rm loc},p} \times \dom_{K,q}$ or $\dom_{K,q}\times 
\dom_{{\rm loc},p}$, where $1 \le p < \infty$ and $q$ is the conjugate number of $p$; 
$1/p+1/q=1$. 
Note that $C_K \cap \dom \subset \dom_{K,p}$ for any $1\le p\le \infty$.

%%%%%%%%%%%%%%%%%%%%%
%%%%%   Lemma   %%%%%
%%%%%%%%%%%%%%%%%%%%%

\begin{lemma}\label{Lem;2.2}  
Let $1\le p<\infty$ and $q$ be its conjugate; that is, $1/p+1/q=1$. 
If $M_{\alpha} < \infty$ with some $0<\alpha\le 2$, then
\[
\iint_{x\not=y} \bigl| f(x)-f(y) \bigr|  
\bigl| g(x)-g(y) \bigr| \, \mu(x,dy)m(dx) <\infty\]
for any $f\in \dom_{{\rm loc},p}$ and $g \in \dom_{K,q}$.
Therefore, the integral 
\[
\form(f,g) =\form(g,f)= \iint_{x\not=y} 
\bigl(f(x)-f(y)\bigr)\bigl(g(x)-g(y)\bigr) \mu(x,dy)m(dx) 
\]
makes sense for all $f\in \dom_{{\rm loc},p}$ and $g \in \dom_{K,q}$.
\label{lemma:integral}
\end{lemma}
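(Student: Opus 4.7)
The plan is to establish the asserted absolute integrability; the symmetric formula for $\form(f,g)$ then follows immediately. Let $K=\mathrm{supp}(g)$ and choose a relatively compact open neighborhood $K'$ of $K$ with $r:=\mathrm{dist}(K,\mathbf{R}^d\setminus K')>0$. Since $f\in\dom_{\rm loc}$, pick $f'\in\dom$ with $f=f'$ $m$-a.e.\ on $\overline{K'}$; the symmetry $\mu(x,dy)m(dx)=\mu(y,dx)m(dy)$ implies that $m$-null sets in either coordinate are $\mu m$-null, so this equality transfers into the double integral. Because $g\equiv 0$ outside $K$, the integrand is supported in $\{x\in K\}\cup\{y\in K\}$, and by the same symmetry it suffices to bound
\[
J := \iint_{x\in K}|f(x)-f(y)|\,|g(x)-g(y)|\,\mu(x,dy)\,m(dx).
\]

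I split $J=J_{\rm near}+J_{\rm far}$ according to whether $y\in\overline{K'}$. On the near region both arguments lie in $\overline{K'}$, so $f$ may be replaced by $f'$, and Cauchy--Schwarz applied to the positive symmetric bilinear form $\form$ yields $J_{\rm near}\le\form(f',f')^{1/2}\form(g,g)^{1/2}<\infty$, using $f',g\in\dom$. On the far region $|x-y|\ge r$ and $g(y)=0$, so $|g(x)-g(y)|=|g(x)|$. Comparing $\mathbf{1}_{|x-y|\ge r}$ with $1\wedge|x-y|^\alpha$ and invoking $M_\alpha<\infty$ gives the uniform tail bound
\[
\int_{|x-y|\ge r}\mu(x,dy)\le C_r := (1\vee r^{-\alpha})\,M_\alpha.
\]
Splitting $|f(x)-f(y)|\le|f(x)|+|f(y)|$ produces two pieces. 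The $|f(x)||g(x)|$ piece integrates to at most $C_r\|f\|_p\|g\|_q$ by Hölder in the single variable $x\in K$. The $|f(y)||g(x)|$ piece is handled by Hölder with exponents $q,p$ against the measure $\mu m$ restricted to the far region: one factor is bounded as above by $(C_r\|g\|_q^q)^{1/q}$, while the other factor $\iint|f(y)|^p\,\mu m$ becomes, after the substitution $x\leftrightarrow y$ using symmetry of $\mu m$, an integral over $\{y\in K,\; x\notin\overline{K'}\}$ of $|f(x)|^p$, which is bounded by $C_r\|f\|_p^p$ via the same tail estimate applied uniformly in $x$.

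The main obstacle is precisely the far piece: once $g(y)=0$ the Dirichlet-form cancellation in $|g(x)-g(y)|^2$ is lost, so $\form(g,g)$ alone cannot tame the tail of $\mu(x,\cdot)$. It is $M_\alpha<\infty$---uniform control on the total mass of large jumps---that drives the $L^p$--$L^q$ Hölder bound and closes the argument; this is exactly the ingredient absent from the purely local theory, where no such tail contribution arises.
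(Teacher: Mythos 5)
Your proof is correct and follows essentially the same strategy as the paper's: split the double integral into a near-diagonal part controlled by Cauchy--Schwarz via $\form(f',f')^{1/2}\form(g,g)^{1/2}$ (using the local representative of $f$), and a far part where $|x-y|$ is bounded below, controlled by the uniform tail bound coming from $M_\alpha<\infty$ together with H\"older and the $x\leftrightarrow y$ symmetry trick for the $|f(y)|^p$ term. The only difference is cosmetic: the paper arranges $R-r\ge 1$ so the tail constant is exactly $M_\alpha$, whereas you keep a general gap $r>0$ and the constant $(1\vee r^{-\alpha})M_\alpha$.
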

\begin{proof}
Let $f\in \dom_{{\rm loc},p}$ and $g \in \dom_{K,q}$. 
Let $R,r>0$ be such that $R-r\ge 1$ and 
${\rm supp}[g]\subset B(r)$.   %insert
%the support of $g$ is contained 
%in the open ball $B(r)$  with radius $r>0$ and center origin. 
Take a function $f_R\in \dom$ such that $f=f_R$ $m$-a.e. on $B(R)$.
Using the symmetry of the measure $\mu(x,dy)m(dx)$, it follows:
\begin{align*}
   &\dis{\iint_{x \neq y} |f(x)-f(y)|  |g(x)-g(y)| \, \mu(x,dy)m(dx)}  \\
=&\dis{ \iint_{B(R)\times B(R)\backslash D} |f(x)-f(y)|  |g(x)-g(y)| \, \mu(x,dy)m(dx)}  \\
&+\dis{ \iint_{B(R)\times B(R)^c} |f(x)-f(y)|  
|g(x) | \, \mu(x,dy)m(dx)}  \\ 
&+\dis{ \iint_{B(R)^c\times B(R)} |f(x)-f(y)|  |g(y) | \, \mu(x,dy)m(dx)}  \\ 
=&\dis{ \iint_{B(R)\times B(R)\backslash D} |f(x)-f(y)|  
|g(x)-g(y)| \, \mu(x,dy)m(dx)}  \\ 
&+2\dis{ \iint_{B(R)\times B(R)^c} |f(x)-f(y)|   |g(x) | \, \mu(x,dy)m(dx)}  \\
 =&: {\rm (I)}+2{\rm (II)}.
\end{align*}
First we estimate (I). Since $f=f_R$ on $B(R)$ and $f_R\in \dom$, 
\begin{align*}
{\rm (I)} =&\dis{ \iint_{B(R)\times B(R)\backslash D} |f_R(x)-f_R(y)|  
|g(x)-g(y)| \, \mu(x,dy)m(dx) } \\
\le& \dis{ \sqrt{ \iint_{B(R)\times B(R)\backslash D} 
\bigl(f_R(x)-f_R(y)\bigr)^2 \mu(x,dy)m(dx)}  } \\
& \dis{ \times \sqrt{ \iint_{B(R)\times B(R)\backslash D} 
\bigl(g(x)-g(y)\bigr)^2 \mu(x,dy)m(dx)}  } \\
\le& \dis{ \sqrt{\form(f_R,f_R)} \sqrt{\form(g,g)} 
<\infty.} \\
\end{align*}

Next, since the support of $g$ is included in $B(r)$, 
applying the H\"older inequality, we have that
\begin{align*}
{\rm (II)} =& \dis{ \iint_{B(r)\times B(R)^c} |f(x)-f(y)|  |g(x) | \, \mu(x,dy)m(dx)}  \\  
\le& \dis{ \left( \iint_{B(r)\times B(R)^c} |f(x)-f(y)|^p \mu(x,dy)m(dx) \right)^{1/p} } \\
& \dis{ \times \left( \iint_{B(r)\times B(R)^c}  |g(x)|^q \mu(x,dy)m(dx) \right)^{1/q}}  \\ 
\le& \dis{ \left( 2^{p-1} \iint_{B(r)\times B(R)^c} 
\bigl(|f(x)|^p+|f(y)|^p\bigr) \mu(x,dy)m(dx) \right)^{1/p} } \\ 
& \dis{ \times \left( \int_{B(r)} |g(x)|^q \int_{B(R)^c} 
\mu(x,dy)m(dx) \right)^{1/q}}  \\ 
\le& \dis{ \left( 2^p \int_{B(r)} |f(x)|^p \int_{B(R)^c} 
\mu(x,dy)m(dx) \right)^{1/p} ||g||_{L^q}  M_{\alpha}^{1/q} } \\  
\le& \dis{ 2M_{\alpha} ||f||_{L^p} ||g||_{L^q}<\infty.}
\end{align*}
Here we used the inequality: $(a+b)^p \le 2^{p-1}\bigl(a^p+b^p\bigr)$ for 
$a\ge 0$, $b\ge 0$, and $1\le p<\infty$ in the second inequality.
\end{proof}

%%%%%%%%%%%%%%%%%%%%%%
%%%%%   Remark   %%%%%
%%%%%%%%%%%%%%%%%%%%%%
\begin{remark} If  $1\le p_1 <p_2 < 2 < p_3<\infty$ and $q_i \, 
(i=1,2,3)$ are the respective conjugate numbers; $1/p_i+1/q_i=1$, then
$1< q_3 < 2 < q_2 <q_1 \le \infty$. It is known that $L_K^p \subset L_K^2 \subset L^{p'}_K$
for $1\le p' < 2 < p$.  So it is clear that $\dom_K \subset L^2_K \subset L^{p'}_K$ and
$\dom \cap L^2_K=\dom_K$, because $\dom$ is the closure of 
$C_K^{\rm lip}$ with respect to 
$\bigl(\form(\cdot,\cdot)+||\cdot||_{L^2}^2\bigr)^{1/2}$.
Thus we find that 
$$
\dom_{K,q_1} \subset \dom_{K,q_2} \subset 
\dom_{K,2} \, \bigl(=\dom_K\bigr)
$$
and 
$$
\dom_{K,q_3}=\dom_K \cap L^{q_3} =(\dom \cap L^2_K)\cap L^{q_3}
=\dom \cap (L^2_K \cap L^{q_3})= \dom \cap L^2_K=\dom_{K,2}.
$$
This means $\dom_{K,q} = \dom_K$ for every $2\le p<\infty$ where $q$ is the 
conjugate number of $p$.

%Since $\dom_{\rm loc} \subset L^2_{\rm loc}$ and $L^p_{\rm loc} \subset L^{p'}_{\rm loc}$ 
%for $p\ge p'$, we see the following inclusions: 
%\[
%\dom_{K,q_1} \subset \dom_{K,q_2} \subset 
%\dom_{K,2} \bigl(=\dom_K\bigr) = \dom_{K,q_3.} 
%\]
\label{remark:domain}
\end{remark}

\bigskip
We define the ``carr\'e du champ operator $\Gamma$ associated with ${\mathcal A}$'' 
(see \cite{BH91}, \cite{U07}) as
\[
\Gamma(f,g)(x):=\int_{y\not=x} \bigl(f(x)-f(y)\bigr)
\bigl(g(x)-g(y)\bigr)\mu(x,dy), \quad x\in {\mathbf{R}}^d
\]
for any pair $f\in \dom_{{\rm loc},p}$ and $g\in \dom_{K,q}$; or 
$f\in \dom_{K,q}$ and $g\in \dom_{{\rm loc},p}$ with $1\le p<\infty$ and 
its conjugate $q$. Then $\Gamma(f,g) \in L^1$ for such $f$ and $g$ by the 
previous lemma.

Set 
${\mathcal C}=C_K \cap \dom$ and 
\[
{\mathcal H}
=\Bigl\{ u\in L^0:\ \form(u,\varphi) \ \mbox{ makes  
sense  for  all  $\varphi \in {\mathcal C}$} \Bigr\}.
\]
By Lemma \ref{lemma:regularity}, Lemma \ref{lemma:integral}, 
Theorem 1.5.2 in \cite{FOT94}, and taking into account that ${\mathcal C} 
\subset \dom_{K,p}$ for $1\le p< \infty$, it follows that 
\[
\dom \subset \dom_e \subset {\mathcal H} \quad {\rm and} \ \ 
\bigcup_{1\le p<\infty} \left(\dom_{\rm loc} \cap L^p({\mathbf{R}}^d;m)\right) 
\subset {\mathcal H}.
\]
Here $\dom_e$ is the extended Dirichlet space of $\dom$ (see 
\cite{FOT94}). Different from the local (or second order elliptic 
differential operators) case, the form $\form$ or $\Gamma$ can not be 
extended to $\dom_{\rm loc}$ (see \S 3.2 in \cite{FOT94}). Therefore, in general, $\dom_{\rm loc}$ does not 
contain ${\mathcal H}$ and vice versa.

%%%%%%%%%%%%%%%%%%%%%%%%%%
%%%%%   DEFINITION   %%%%%
%%%%%%%%%%%%%%%%%%%%%%%%%%
\begin{definition}[$\form$-Subharmonic functions] \label{defi;subharmonic}
A function $f \in {\mathcal H}$ is 
\emph{$\form$-subharmonic} (\emph{$\form$-superharmonic}, respectively) if 
\[
\form(f,\varphi) \le 0 \quad \quad 
(\form(f,\varphi)\ge 0, \mbox{respectively})
\]
for any $\varphi \in {\mathcal C}$ with $\varphi\ge 0$.
Moreover, $f\in {\mathcal H}$ is $\form$-{\it harmonic} 
if it is both $\form$-subhamonic and $\form$-superharmonic.
\end{definition}

%%%%%%%%%%%%%%%%%%%%%%
%%%%%   Remark   %%%%%
%%%%%%%%%%%%%%%%%%%%%%
\begin{remark} Let us discuss the relationship between $\form$-harmonic functions, 
potentials, and excessive functions.
\begin{itemize} 
\item[{\sf (1)}] As in \cite{Deny} or the proof of Theorem 2.2.1 
in \cite{FOT94}, we see that $f\in {\mathcal H}$ is 
$\form$-superharmonic if and only if it is a 
{\it potential}; that is, there exists a positive Radon measure 
$\mu$ on ${\mathbf{R}}^d$ such that
\begin{equation}
\label{potential}
\form(f,\varphi)=\int_{{\mathbf{R}}^d} \varphi(x) \mu(dx)
\end{equation}
for every $\varphi \in C_K^{\rm lip}$.  
%%%%
%%% Insert the following arguments %%%%%
%%%
In fact, let $I(v)=\form(f, v)$ for $v \in C_K^{\rm lip}$. Note that 
$C_K^{\rm lip}\subset {\mathcal C}$. Consider any compact set $K$ and choose 
a nonnegative function $v_K\in C_K^{\rm lip}$ such that $v_K \ge 1$ on $K$. 
Then for any $v\in C_K^{\rm lip}$ with ${\rm supp}[v]\subset K$, we see 
$$
|I(v)|\le \| v\|_{\infty} |I(v_K)|, 
$$
since $\|v\|_{\infty}v_K - v \in C_K^{\rm lip}$ and 
$\|v\|_{\infty}v_K - v \ge 0$. Note that any function $w\in C_K^{\rm lip}$ can be 
approximated  uniformly on $\mathbf{R}^d$ by functions $v\in C_K$ with 
${\rm supp}[v] \subset {\rm supp}[w]$. So, from the above inequality, 
$I$ can be extended uniquely to a positive linear functional on $C_K(\mathbf{R}^d)$.
Hence there exists a positive Radon measure $\mu$ on $\mathbf{R}^d$ so that 
(\ref{potential}) holds.
%%%
%%% %%% %%% %%% %%% %%%
%%%
If the Dirichlet form $(\form,\dom)$ is transient, and, in addition,
$f$ is in $\dom_e$, then $f$ is excessive and the measure $\mu$ is smooth 
with respect to the form $\form$.

\item[{\sf (2)}] (Communicated by Professor Masatoshi Fukushima). Assume the 
Dirichlet form $(\form, \dom)$ is transient. Due to Theorem 1.5.3 \cite{FOT94}, 
the pair $(\dom_e, \form)$ is then a Hilbert space. 
Suppose a function $u\in\dom_e$ is $\form$-superharmonic in the following 
sense:
\[\form(u,v)\ge 0 \quad\]
for every $v \in \dom_e$ with $v\ge 0$.
Then we see $u\ge0$. In fact, since every normal contraction operates 
on $(\form, \dom_e)$, it follows that 
\[
\begin{array}{lcl}
\form(u,u) &\ge& \form(|u|,|u|)=\form\bigl((|u|-u)+u,(|u|-u)+u\bigr)  \\
&=& \form(u,u) +2\form(u, |u|-u) +\form(|u|-u,|u|-u) \\ 
&\ge& \form(u,u) +\form(|u|-u,|u|-u). \\ 
\end{array}
\]
This imples that $\form(|u|-u,|u|-u)=0$, whence $u=|u|\ge 0$.
\end{itemize}
\label{remark:transient}
\end{remark}

%%%%%%%%%%%%%%%%%%%%%%%%%%%%%%%%%%%%%%%%%%%%
%%%%%   section (derivation property)  %%%%%
%%%%%%%%%%%%%%%%%%%%%%%%%%%%%%%%%%%%%%%%%%%%

\section{Integral Derivation Properties} \label{Section;IDP}
We will prove various types of $L^p$-Liouville properties in the next 
section. The proof is based on a derivation property for $\Gamma$. 
The main purpose of this section is to establish its integral version for 
various classes of functions.

%%%%%%%%%%%%%%%%%%%%%%%%%%%%%%%
%%%%     Proposition       %%%%
%%%%  Derivation property  %%%%
%%%%%%%%%%%%%%%%%%%%%%%%%%%%%%%
\begin{prop}[Derivation property of integral type]
Let $1\le p<\infty$ and $q$ be its conjugate; $1/p+1/q=1$. 
If $M_{\alpha}<\infty$ with some $0<\alpha\le 2$, then
\begin{align} 
&\int \Gamma(f,g\cdot h)(x) \, m(dx) \\
= &\int g(x) \Gamma(f,h)(x)\, m(dx) + 
\int h(x) \Gamma(f, g)(x)\, m(dx)
\label{eqn:derivation}
\end{align}
for all $f\in \dom_{{\rm loc},p} $, $g\in \dom_{{\rm loc},q}$, and 
$h\in C_K^{\rm lip}$.
\label{prop:deri}
\end{prop}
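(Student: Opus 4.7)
The plan is to reduce the claimed identity to the pointwise Leibniz rule
\[
g(x)h(x) - g(y)h(y) = g(x)\bigl(h(x) - h(y)\bigr) + h(y)\bigl(g(x) - g(y)\bigr),
\]
combined with Fubini's theorem and the symmetry $\mu(x,dy)m(dx) = \mu(y,dx)m(dy)$. Multiplying this identity by $f(x) - f(y)$ and integrating against $\mu(x,dy)$ gives pointwise
\[
\Gamma(f,gh)(x) = g(x)\,\Gamma(f,h)(x) + \int \bigl(f(x)-f(y)\bigr)\,h(y)\bigl(g(x)-g(y)\bigr)\,\mu(x,dy).
\]
After integrating against $m(dx)$ and relabelling $x \leftrightarrow y$ in the last term, the factor $h(y)$ becomes $h(x)$ while the two sign changes in $(f(x)-f(y))(g(x)-g(y))$ cancel, producing $\int h(x)\,\Gamma(f,g)(x)\,m(dx)$ as required.

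The real content is then absolute integrability of all three double integrals, so that Fubini applies. First I would check $gh \in \dom_{K,q}$: picking $\tilde g \in \dom$ with $\tilde g = g$ $m$-a.e.\ on $\mathrm{supp}(h)$, one has $gh = \tilde g h$, and the bound $|h(x)-h(y)|^2 \le C_h^2(1\wedge|x-y|^\alpha)$ together with $M_\alpha < \infty$ yields
\[
\form(\tilde g h, \tilde g h) \le 2 C_h^2 M_\alpha \|\tilde g\|_{L^2}^2 + 2\|h\|_\infty^2\, \form(\tilde g, \tilde g) < \infty.
\]
Approximating $\tilde g$ in the norm of $\dom$ by elements of $C_K^{\mathrm{lip}}$ then places $\tilde g h$ in $\dom$, and since $gh$ has compact support and $g \in L^q$, in fact $gh \in \dom_{K,q}$. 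Lemma~\ref{Lem;2.2} applied to $(f,gh)$ now controls the left-hand integrand in $L^1$.

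The main obstacle is the twisted integral $\iint |f(x)-f(y)|\,|h(y)|\,|g(x)-g(y)|\,\mu(x,dy)m(dx)$, since Lemma~\ref{Lem;2.2} is formulated only with the compactly supported factor sitting at $x$. I would handle it by rerunning that lemma's decomposition with the roles of $x$ and $y$ interchanged: choose $r>0$ with $\mathrm{supp}(h)\subset B(r)$ and $R \ge r+1$, note that the integrand vanishes unless $y \in B(r)$, and split the remaining region into $B(R)\times B(r)$ (bound by Cauchy-Schwarz against $\sqrt{\form(f_R,f_R)\,\form(g_R,g_R)}$) and $B(R)^c\times B(r)$ (bound by H\"older with exponents $p,q$, using $|x-y|\ge 1$ and $M_\alpha<\infty$ to produce a constant times $\|f\|_{L^p}\|g\|_{L^q}\|h\|_\infty$). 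Finally, the pointwise estimate $|g(x)(h(x)-h(y))| \le |(gh)(x)-(gh)(y)| + |h(y)||g(x)-g(y)|$ dominates the middle integrand by the sum of the other two, so $g\,\Gamma(f,h) \in L^1$ as well, and Fubini combined with the pointwise identity yields the claim.
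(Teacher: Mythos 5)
Your proposal is correct and follows the same overall strategy as the paper's proof: the pointwise Leibniz identity $g(x)h(x)-g(y)h(y)=g(x)(h(x)-h(y))+(g(x)-g(y))h(y)$, absolute convergence of each double integral so that Fubini and the relabelling $x\leftrightarrow y$ (via the symmetry of $\mu(x,dy)m(dx)$) are legitimate, and the same ball decomposition $B(R)\times B(R)$ versus the far-off-diagonal region $|x-y|\ge 1$ with Cauchy--Schwarz on the near part and H\"older plus the $M_\alpha$ bound on the far part. Two details differ, both to your credit. First, you actually verify that $g\cdot h\in\dom_{K,q}$ (via the estimate $\form(\tilde g h,\tilde g h)\lesssim C_h^2M_\alpha\|\tilde g\|_{L^2}^2+\|h\|_\infty^2\form(\tilde g,\tilde g)$ and approximation by $C_K^{\rm lip}$ functions, whose products with $h$ stay in $C_K^{\rm lip}$); the paper simply asserts this membership before invoking Lemma~\ref{Lem;2.2}. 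Second, for the middle term $\int |g|\,|\Gamma(f,h)|\,dm$ the paper runs a separate three-region estimate (I)+(II)+(III), whereas you dispose of it by the pointwise domination $|g(x)(h(x)-h(y))|\le|(gh)(x)-(gh)(y)|+|h(y)|\,|g(x)-g(y)|$, so only two of the three absolute integrals need independent estimation. The only cosmetic point is that the Lipschitz bound $|h(x)-h(y)|^2\le C_h^2(1\wedge|x-y|^\alpha)$ requires $C_h$ to absorb $2\|h\|_\infty$ on the region $|x-y|>1$ (the paper has the same imprecision); this does not affect the argument.
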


\begin{proof}
Let $f\in \dom_{{\rm loc},p}$, $g\in \dom_{{\rm loc},q}$ and 
$h\in C_K^{\rm lip}$. 
Recall that $\dom_{{\rm loc},p} =\dom_{\rm loc}\cap L^p$.  
It suffices to show that each of the integrals in (\ref{eqn:derivation}) 
converges. In fact,
\begin{align*}
& \dis{\int \Gamma(f,g\cdot h)(x) \, m(dx) }  \\ 
=&\dis{ \iint_{x\not=y} \bigl(f(x)-f(y)\bigr)\bigl(g(x)h(x)-g(y)h(y)\bigr)\mu(x,dy)m(dx) } \\ 
=&\dis{  \iint_{x\not=y} \bigl(f(x)-f(y)\bigr)} \\
& \times \dis{\bigl(g(x)\bigl(h(x)-h(y)\bigr)} 
\dis{ +\bigl(g(x)-g(y)\bigr)h(y)\bigr) \mu(x,dy)m(dx)} \\ 
= &\dis{ \iint_{x\not=y} g(x)\bigl(f(x)-f(y)\bigr)\bigl(h(x)-h(y)\bigr)
\mu(x,dy)m(dx)} \\ 
&\dis{+ \iint_{x\not=y} h(y) \bigl(f(x)-f(y)\bigr) \bigl(g(x)-g(y)\bigr) \mu(x,dy)m(dx).}
\end{align*}
Using the symmetry $\mu(x,dy)m(dx)=\mu(y,dx)m(dy)$ and a change of variables 
($x \leftrightarrow y$) in the second integral of the most right-hand side, 
we will %insert "will"
obtain (\ref{eqn:derivation}).

Let $R-1 \ge r>0$ such that ${\rm supp}[h]\subset B(r)\subset B(R)$. 
Since $f\in \dom_{{\rm loc},p}$ and $g\cdot h\in \dom_{K,q}$, 
the left-hand side of (\ref{eqn:derivation}) converges absolutely by Lemma \ref{Lem;2.2}. 
Thus, we only consider the two integrals of the right-hand side. 
Take $f_R,g_R \in \dom$ so that $f=f_R$ and $g=g_R$, $m$-a.e.\ on $B(R)$, respectively.

We estimate the first term as follows:
\begin{align*} 
&\dis{\int \bigl| g(x)\bigr|  \bigl|\Gamma(f,h)(x) \bigr| \, m(dx)} \\  
\le & \dis{ \iint_{y\not=x} \bigl| g(x)\bigr|  
\bigl|f(x)-f(y)\bigr| \bigl|h(x)-h(y)\bigr| \mu(x,dy)\, m(dx)  }\\ 
\le & \dis{ \iint_{B(R)\times B(R)\backslash D} |g(x)|  
 \bigl| f(x)-f(y)\bigr|  \bigl| h(x)-h(y)\bigr| \mu(x,dy)\, m(dx) } \\ 
 & + \dis{ \iint_{B(r)\times B(R)^c} |g(x)|  
\bigl|f(x)-f(y)\bigr|  \bigl|h(x)\bigr| \mu(x,dy)\, m(dx) } \\ 
 & + \dis{ \iint_{B(R)^c\times B(r)} |g(x)|  
\bigl|f(x)-f(y)\bigr|  \bigl|h(y)\bigr| \mu(x,dy)\, m(dx) } \\ 
=&: {\rm (I)} + {\rm (II)} + {\rm (III)}.
\end{align*}
Here $D$ is the diagonal set $\{(x,x) : x\in {\mathbf{R}}^d \}$.
By making use of the Schwarz inequality and the fact that $h \in 
C_K^{\rm lip}$, 
\begin{align*}
{\rm (I)}  \le &
\dis{ \left( \iint_{B(R)\times B(R)\backslash D} |g_R(x)|^2 
\bigl| h(x)-h(y)\bigr|^2 \mu(x,dy)\, m(dx) \right)^{1/2} } \\ % \sqrt{} --> ()^{1/2}
& \quad \times \dis{ \left( \iint_{B(R)\times B(R)\backslash D} 
   \bigl| f_R(x)-f_R(y)\bigr|^2 \mu(x,dy)\, m(dx)\right)^{1/2} } \\
\le &\dis{ \left(c_h \iint_{B(R)\times B(R)\backslash D} |g_R(x)|^2 
   \bigl(1 \wedge \bigl|x- y|^2 \bigr) \mu(x,dy)\, m(dx)\right)^{1/2} } \\
& \quad \times \dis{ \left(\iint_{B(R)\times B(R)\backslash D} 
   \bigl| f_R(x)-f_R(y)\bigr|^2 \mu(x,dy)\, m(dx) \right)^{1/2} } \\
&\le \dis{ \left( c_h M_{\alpha} \int_{B(R)} |g_R(x)|^2 m(dx) \right)^{1/2}
 \sqrt{ \form(f_R,f_R)} } \\
&\le \dis{ \sqrt{c_h M_{\alpha}} \, ||g_R||_{L^2}  \sqrt{ \form(f_R,f_R)}<\infty. }
\end{align*}
Here $c_h$ is the Lipschitz constant of $h$. 

In order to estimate (II) and (III), we establish the following inequality:
\begin{equation}
\int_K \int_{|x-y|\ge 1} \left(\bigl|\varphi(x)|^p +\bigl|\varphi(y)|^p\right)
\mu(x,dy)m(dx) \le 2M_{\alpha} \|\varphi\|_{L^p}^p
\label{est-1}
\end{equation}
 for every $\varphi \in L^p$ and any compact set $K\subset {\mathbf{R}}^d$.
In fact, by using the Fubini theorem and the symmety of $\mu(x,dy)m(dx)$, 
\begin{align*}
& \int_K \int_{|x-y|\ge 1} \left(\bigl|\varphi(x)|^p + 
  \bigl|\varphi(y)|^p\right) \mu(x,dy)m(dx)  \\ 
  = &
\int_K \bigl|\varphi(x)|^p \int_{|x-y|\ge 1} \mu(x,dy)m(dx) 
  + \int_K \int_{|x-y|\ge 1} \bigl|\varphi(y)|^p\mu(x,dy)m(dx)  \\ 
\le &  M_{\alpha} \int_K \bigl|\varphi(x)|^p m(dx) 
  + \int_K \int_{|x-y|\ge 1} \bigl|\varphi(y)|^p\mu(y,dx)m(dy)   \\
= & M_{\alpha} \int_K \bigl|\varphi(x)|^p m(dx) 
  + \int_{{\mathbf{R}}^d} \bigl|\varphi(y)|^p \int_{K \cap |x-y|\ge 1} \mu(y,dx)m(dy)  \\ 
\le &  2 M_{\alpha} \|\varphi\|_{L^p}^p. 
\end{align*}

We now estimate (II). By the H\"older inequality and the fact $R-r\ge 1$, 
\begin{align*}
{\rm (II)} \le & 
\dis{ \left( \iint_{B(r)\times B(R)^c}  
\bigl|g(x)\bigr|^q \bigl|h(x)\bigr|^q
 \mu(x,dy)\, m(dx)\right)^{1/q} } \\  
   & \dis{  \times \left( \iint_{B(r)\times B(R)^c}  \bigl|f(x)-f(y)\bigr|^p \mu(x,dy)\, 
 m(dx)\right)^{1/p}  } \\ 
\le& \dis{ \left(\|h\|_{\infty}^q \int_{B(r)} \bigl|g(x)\bigr|^q 
\int_{B(R)^c} \mu(x,dy)m(dx) \right)^{1/q} } \\ 
& \dis{ \times \left(2^{p-1} \int_{B(r)} \int_{|x-y| \ge 1}
\left(|f(x)|^p +|f(y)|^p \right) \mu(x,dy)m(dx) \right)^{1/p}  } \\ 
\le& \dis{2 M_{\alpha} \|h\|_{\infty}  \|g\|_{L^q}  \|f\|_{L^p}, }
\end{align*}
where we used (\ref{est-1}) in the last inequality. 

Moreover, using H\"older inequality again, 
\begin{align*}
{\rm (III)} \le & \dis{ 
\left( \iint_{B(R)^c \times B(r)} \bigl|g(x)\bigr|^q  
\bigl| h (y)\bigr|^q \mu(x,dy)m(dx) \right)^{1/q} } \\  
& \dis{ \times \left( \iint_{B(R)^c \times B(r)} \bigl|f(x)-f(y) |^p 
\mu(x,dy)m(dx) \right)^{1/p}. } 
\end{align*}
The right-hand side can be estimated by 
$2 M_{\alpha} \|h\|_{\infty}  \|g\|_{L^q}  \|f\|_{L^p}$
as in (II). Hence we conclude that 
the first term of the right-hand side of (\ref{prop:deri}) 
converges absolutely.

\medskip
We proceed to estimate the second term of the right-hand side of 
(\ref{prop:deri}). Since the support of $h$ is contained in $B(r)\subset B(R)$, 
\begin{align*}
      &\dis{\int \bigl| h(x)\bigr|  \bigl|\Gamma(f,g)(x) \bigr| \, m(dx)} \\  
\le &\dis{ \iint_{y\not=x} \bigl| h(x)\bigr|  
\bigl|f(x)-f(y)\bigr| \bigl|g(x)-g(y)\bigr| \mu(x,dy)\, m(dx)  }\\ 
=&\dis{ \iint_{B(r)\times B(R) \backslash D} \bigl| h(x)\bigr|  
\bigl|f(x)-f(y)\bigr| \bigl|g(x)-g(y)\bigr| \mu(x,dy)\, m(dx)  }\\ 
&+\dis{ \iint_{B(r)\times B(R)^c} \bigl| h(x)\bigr|  
\bigl|f(x)-f(y)\bigr| \bigl|g(x)-g(y)\bigr| \mu(x,dy)\, m(dx)  }\\ 
=&: {\rm (I)} + {\rm (II)}. \\
\end{align*}
By using the Schwarz inequality, 
\begin{align*}
 {\rm (I)} \le& \dis{ \|h\|_{\infty} \iint_{B(R)\times B(R) \setminus D} 
  \bigl|f_R(x)-f_R(y)\bigr| \bigl|g_R(x)-g_R(y)\bigr| \mu(x,dy)\, m(dx)  }\\ 
  \le& \dis{ \|h\|_{\infty} \sqrt{ \iint_{B(R)\times B(R) \setminus D} 
     \bigl|f_R(x)-f_R(y) \bigr|^2 \mu(x,dy)\, m(dx)}  }\\ 
   & \times \dis{ \sqrt{ \iint_{B(R)\times B(R) \setminus D} 
   \bigl|g_R(x)-g_R(y) \bigr|^2 \mu(x,dy)\, m(dx)}  }\\ 
 \le& \dis{ \|h\|_{\infty}  \sqrt{\form(f_R,f_R)} \sqrt{\form(g_R,g_R)}.}
\end{align*}
By the H\"older inequatliy, 
\begin{align*}
 {\rm (II)} &\le  \dis{ \|h\|_{\infty}
\iint_{B(r)\times B(R)^c} 
\bigl|f(x)-f(y)\bigr| \bigl|g(x)-g(y)\bigr| \mu(x,dy)\, m(dx)  }\\ &\le \dis{ \|h\|_{\infty}
\left(\iint_{B(r)\times B(R)^c} \bigl|f(x)-f(y)\bigr|^p 
\mu(x,dy)\, m(dx)  \right)^{1/p} } \\ &   \times \dis{ 
\left(\iint_{B(r)\times B(R)^c} \bigl|g(x)-g(y)\bigr|^q 
\mu(x,dy)\, m(dx)  \right)^{1/q} } \\ &\le \dis{ \|h\|_{\infty}
\left(2^{p-1} \iint_{B(r)\times B(R)^c} \left( \bigl|f(x)\bigr|^p + 
\bigl|f(y) \bigr|^p\right) \mu(x,dy)\, m(dx)  \right)^{1/p} } \\ &  \times \dis{ 
\left(2^{q-1} \iint_{B(r)\times B(R)^c} \left( \bigl|g(x)\bigr|^q + 
\bigl|g(y) \bigr|^q \right) \mu(x,dy)\, m(dx)  \right)^{1/q}, }
\end{align*}
where the last term is estimated by 
\[4 M_{\alpha} \|h\|_{\infty} \|f\|_{L^p} \|g\|_{L^q}\]
using %erase by, insert "using"
 (\ref{est-1}). 
Thus the second term of the right-hand side of 
(\ref{prop:deri}) also converges absolutely. 
Now the proof completes.
\end{proof}

%%%%%%%%%%%%%%%%%%%%%%
%%%%%   Remark   %%%%% 
%%%%%%%%%%%%%%%%%%%%%%
\begin{remark} 
\begin{itemize}
\item[\sf (1)] Carre du champ operators are originally defined by 
Roth \cite{R74, R76} and then extensively studied by Bouleau and Hircsch 
in their book \cite{BH91} mainly in the local operetors case (see also 
\cite{B94}, \cite{AMSh94}).

\item[\sf (2)] Carlen, Kusuoka and Stroock \cite{CKS87} have 
already obtained the derivation property for integral type
for a general Dirichlet form for the bounded functions. 
Namely, 
\begin{align*}
\form(f,gh)&=\int \Gamma(f,g\cdot h)(x)\, m(dx) \\ &=\int \tilde{g}(x)\Gamma(f,h)(x)\, m(dx) 
+ \int \tilde{h}(x) \Gamma(f,g)\, m(dx)
\end{align*}
for all $f,g,h \in \dom_b=\form \cap L^\infty$,
where $\tilde{u}$ is a quasi-continuous modification of $u$. 
Here we give a simple proof of this fact. 
Recall %erase: an 
equation (see \cite[(3.2.14)]{FOT94} or 
\cite[Lemma 3.1.]{K02}):
\[
2 \int \tilde{f}(x)\Gamma(u,u)\, m(dx)=2\form(uf,u)-\form(u^2,f), \quad 
f,u \in \dom_b.
\]
Inserting $u=g+h$ and $u=g-h$ respectively into $g,h\in \dom_b$, 
and then subtracting the inserted equations, 
\begin{equation}
2\int \tilde{f}\Gamma(g,h)\, m(dx)=\form(gf,h)+\form(hf,g)-\form(gh,f).
\label{eqn:deri}
\end{equation}
Then make change the functions $f \leftrightarrow h$ in the above, 
\begin{equation}
2\int \tilde{h}\Gamma(g,f)\, m(dx)=\form(gh,f)+\form(fh,g)-\form(gf,h).
\label{eqn:deri2}
\end{equation}
Adding each sides of (\ref{eqn:deri}) and (\ref{eqn:deri2}) and 
using the symmetry, we have 
\[
\int \tilde{f}\Gamma(h,g)\, m(dx) +\int \tilde{h}\Gamma(f,g)\, m(dx)
=\form(fh,g)=\int \Gamma(fh,g)\, m(dx).
\]
This is the desired equation.
We observe that the assumption such that the functions are bounded is essential,
while, our derivation property allows the functions to be unbounded.
\end{itemize}
\end{remark}

We can extend Proposition \ref{prop:deri} in two cases.
The first case is the ``truncated jump kernel":
%%%%%%%%%%%%%%%%%%%%%%%%%%%%%%%%%%%%%%%%%%%%%%
%%%%%   corollary(derivation,truncate)   %%%%%
%%%%%%%%%%%%%%%%%%%%%%%%%%%%%%%%%%%%%%%%%%%%%%
\begin{cor}\label{Cor;DT} 
Let $\mu(x,dy)$ be a kernel such that the support is included in $B_x(R)$ 
with some $R>0$ for any $x\in {\mathbf{R}}^d$. 
Let $1\le p<\infty$ and $q$ be its conjugate. 
If $M_{\alpha} < \infty$ for some $0 < \alpha \le 2$,
then 
\[
\int \Gamma(f,g\cdot h)(x)\, m(dx)
=\int g(x) \Gamma(f,h)(x)\, m(dx) + 
\int h(x) \Gamma(f, g)(x)\, m(dx)
\]
for all $f \in \dom_{\rm loc}\cap L^p_{\rm loc}$, 
$g \in \dom_{\rm loc}\cap L^q_{\rm loc}$, and $h \in C_K^{\rm lip}$.
\end{cor}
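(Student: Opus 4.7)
The plan is to reduce to Proposition \ref{prop:deri} by replacing $f$ and $g$ with compactly supported surrogates $F$ and $G$ that lie globally in $\dom\cap L^p$ and $\dom\cap L^q$, respectively. The crucial observation is that because $\mu(x,\bu)$ is supported in $B(x,R)$ and $h$ has compact support, each of the three integrals in the statement is actually concentrated on a fixed bounded region, so only local integrability of $f$ and $g$ is ever needed to make sense of them.

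First, I would fix $r>0$ with $\mathrm{supp}[h]\subset B(r)$ and set $R_0=r+R$. Using $f,g\in\dom_{\rm loc}$, pick $\tilde f,\tilde g\in\dom$ with $\tilde f=f$ and $\tilde g=g$ $m$-a.e.\ on $B(R_0+1)$, and choose $\chi\in C_K^{\rm lip}$ with $\chi\equiv 1$ on $B(R_0)$ and $\mathrm{supp}[\chi]\subset B(R_0+1)$. Set $F=\chi\tilde f$ and $G=\chi\tilde g$. Using the elementary bound $(\chi(x)-\chi(y))^2\le C_\chi(1\wedge|x-y|^\alpha)$ together with $M_\alpha<\infty$ and the symmetry of $\mu(x,dy)m(dx)$, one obtains
\[
\form(\chi u,\chi u)\le 2\|\chi\|_\infty^2\,\form(u,u)+2C_\chi M_\alpha\|u\|_{L^2}^2
\]
for every $u\in\dom$, so that $F,G\in\dom$. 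Both $F$ and $G$ are compactly supported inside $B(R_0+1)$ and agree there with $\chi f$ and $\chi g$; hence $f\in L^p_{\rm loc}$ and $g\in L^q_{\rm loc}$ give $F\in L^p$ and $G\in L^q$. Proposition \ref{prop:deri} therefore applies to the triple $(F,G,h)$.

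It then remains to match each of the three resulting integrals with the corresponding one for $(f,g,h)$. Consider for example $\int\Gamma(F,Gh)\,m(dx)$; its integrand $(F(x)-F(y))(G(x)h(x)-G(y)h(y))$ vanishes unless $h(x)\ne0$ or $h(y)\ne0$, and the truncation $|x-y|<R$ then forces both $x$ and $y$ into $B(R_0)$, where $\chi\equiv1$, $F=f$, and $G=g$ $m$-a.e. The same observation handles $\int h\,\Gamma(F,G)\,m(dx)$. The main obstacle lies in $\int G\,\Gamma(F,h)\,m(dx)$, whose integrand vanishes only under the weaker condition $h(x)=h(y)$: here one has to check that on $\{h(x)\ne h(y),\ |x-y|<R\}$ at least one of $x,y$ lies in $B(r)$, and hence by the truncation of $\mu$ both lie in $B(R_0)$, so that $G(x)$ may be replaced by $g(x)$ without error. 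Once these three identifications are carried out, the desired identity follows at once from Proposition \ref{prop:deri}.
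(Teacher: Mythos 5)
Your reduction is correct, but it takes a genuinely different route from the one the paper intends. The paper's (implicit) proof of Corollary \ref{Cor;DT} is to rerun the proof of Proposition \ref{prop:deri}, observing---as in Lemma \ref{lemma:integral-2}---that the truncation ${\rm supp}\,\mu(x,\bullet)\subset B(x,R)$ confines every double integral to a bounded block of the form $B(r)\times B(r+R)$, so the off-diagonal terms (II) and (III), which are the only places where global $L^p$/$L^q$ integrability of $f$ and $g$ enters, simply disappear and only the Cauchy--Schwarz estimate on the diagonal block survives; this is precisely why the hypotheses weaken to $L^p_{\rm loc}$ and $L^q_{\rm loc}$. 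You instead keep Proposition \ref{prop:deri} as a black box and feed it the cutoff surrogates $F=\chi\tilde f$, $G=\chi\tilde g$, using the truncation only once, to check that all three integrands live where $\chi\equiv1$; your case analysis is right, in particular the observation that $h(x)\ne h(y)$ forces one of $x,y$ into ${\rm supp}[h]$ and hence, via the truncation, both into $B(r+R)$. What your route buys is that no estimate is repeated; what it costs is the auxiliary fact that $\dom$ is stable under multiplication by $C_K^{\rm lip}$ cutoffs. On that point, note that the inequality $\form(\chi u,\chi u)\le 2\|\chi\|_\infty^2\form(u,u)+2C_\chi M_\alpha\|u\|_{L^2}^2$ by itself only shows that the jump energy of $\chi u$ is finite, which does not yet place $\chi u$ in $\dom$ rather than in a larger reflected space; you should apply the same inequality to $\chi(u_n-u_m)$ for approximants $u_n\in C_K^{\rm lip}$ converging to $u$ in the $\form_1$-norm, so that $\chi u_n$ is $\form_1$-Cauchy with $L^2$-limit $\chi u$. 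With that standard refinement the argument is complete.
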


The second case is for H\"older continuous functions:

%%%%%%%%%%%%%%%%%%%%%%%%%%%%%%%%%%%%%%%%%%%%%
%%%%%   corollary(derivation, Holder)   %%%%%
%%%%%%%%%%%%%%%%%%%%%%%%%%%%%%%%%%%%%%%%%%%%%
\begin{cor}  Let $1 \le p<\infty$ and $q$ be the conjugate.
Let  $0<\beta_1, \beta_2\le 1$ such that $\beta_1+\beta_2\ge \alpha$. 
If $M_{\alpha}< \infty$ for some $0<\alpha\le 2$,
%\footnote{When $p=1$, the conjugate number $q$ is defined to 
%be $\infty$ by convention. }. 
then 
\[
\int \Gamma(f,g\cdot h)(x) \, m(dx)=\int g(x) \Gamma(f,h)(x)\, m(dx) + 
\int h(x) \Gamma(f, g)(x)\, m(dx)\]
for all $f\in C_{\rm loc}^{\beta_1} \cap L^p$,  
$g\in C_{\rm loc}^{\beta_2} \cap L^q$, and $h\in C_K^{\rm lip}$.
\label{cor:deri}
\end{cor}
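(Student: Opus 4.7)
The plan is to follow the structure of the proof of Proposition \ref{prop:deri} closely and only replace the single step where the Dirichlet energy bound was used. The algebraic identity is produced verbatim: expanding $g(x)h(x)-g(y)h(y)=g(x)(h(x)-h(y))+h(y)(g(x)-g(y))$ inside $\int\Gamma(f,gh)(x)\,m(dx)$ and applying the symmetry of $\mu(x,dy)m(dx)$ to the second resulting integral yields the claimed equation. Accordingly, the task reduces to verifying absolute convergence of each of the three integrals, which legitimises both the splitting and the exchange of variables.

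I would fix $R-1\ge r>0$ with $\mathrm{supp}[h]\subset B(r)$, and decompose each integrand according to (a) the bounded region $B(R)\times B(R)\setminus D$ and (b) the cross regions $B(r)\times B(R)^c$ and $B(R)^c\times B(r)$; outside (a) and (b) the $h$-factor (or the $gh$-factor) forces one of $x,y$ into $B(r)$, so these are the only contributions. On the cross regions one has $|x-y|\ge R-r\ge 1$, so the H\"older hypothesis plays no role at all: the estimates of terms (II)--(III) in the proof of Proposition \ref{prop:deri} apply essentially verbatim, using $\|h\|_\infty<\infty$, the $L^p$ and $L^q$ norms of $f$ and $g$, and inequality (\ref{est-1}). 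For the $\int g\,\Gamma(f,h)\,m(dx)$ term, the local $L^\infty$ bound on $g$ over $B(r)$, finite by local H\"older continuity, supplies whatever would have been supplied by $\|g\|_{L^q}$.

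The novel step is on the bounded region $B(R)\times B(R)\setminus D$, where the Schwarz-plus-$\form$ bound of Proposition \ref{prop:deri} is replaced by a pointwise H\"older estimate. On $\{|x-y|\le 1\}$ the local H\"older and Lipschitz constants on $\overline{B(R)}$ give $|f(x)-f(y)||g(x)-g(y)|\le C|x-y|^{\beta_1+\beta_2}$ and $|f(x)-f(y)||h(x)-h(y)|\le C|x-y|^{\beta_1+1}$. The hypothesis $\beta_1+\beta_2\ge\alpha$ together with $\beta_2\le 1$ yields $\beta_1+1\ge\alpha$ (and symmetrically $\beta_2+1\ge\alpha$), whence each such product is at most $C(1\wedge|x-y|^\alpha)$; the remaining factor ($|g(x)|$, $|h(x)|$, or $|h(y)|$) is uniformly bounded on $B(R)$, and integrating against $\mu(x,dy)m(dx)$ produces a bound $C M_\alpha\, m(B(R))$. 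On $B(R)\times B(R)\cap\{|x-y|>1\}$ the three H\"older-type factors are uniformly bounded, and the contribution is again at most $C M_\alpha\, m(B(R))$.

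The hard part is purely organisational: one must carry the regional decomposition through three distinct integrals while keeping track of which pair of H\"older exponents controls which difference. Once the arithmetic identity $\beta_1+1\ge \alpha$, $\beta_2+1\ge\alpha$, $\beta_1+\beta_2\ge\alpha$ is noted, the near-diagonal contributions become a single kernel estimate against $M_\alpha$, and the far-diagonal contributions are already handled by Proposition \ref{prop:deri}. No ingredient beyond local H\"older continuity, $M_\alpha<\infty$, and (\ref{est-1}) is required, and the formal identity from the opening paragraph can then be re-derived by Fubini.
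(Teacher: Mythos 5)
Your proposal is correct and matches the argument the paper intends: the corollary is stated without an explicit proof precisely because it is Proposition \ref{prop:deri} with the single near-diagonal step changed, namely replacing the Schwarz/Dirichlet-energy bound on $B(R)\times B(R)\setminus D$ by the pointwise H\"older estimates $|f(x)-f(y)|\,|g(x)-g(y)|\le C|x-y|^{\beta_1+\beta_2}\le C\bigl(1\wedge|x-y|^{\alpha}\bigr)$ and $|f(x)-f(y)|\,|h(x)-h(y)|\le C|x-y|^{\beta_1+1}\le C\bigl(1\wedge|x-y|^{\alpha}\bigr)$ (using $\beta_2\le 1$), then integrating against $M_\alpha$. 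The off-diagonal and cross-region estimates via (\ref{est-1}) and the $L^p$/$L^q$ norms carry over verbatim, exactly as you say.
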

Here $C_{\rm loc}^{\beta}=C_{\rm loc}^{\beta}({\mathbf{R}}^d)$ is the set of all 
locally $\beta$-H\"older continuous functions $f$ for $0<\beta\le 1$; 
namely, $f\in C_{\rm loc}^{\beta}$ if and only if
\[
\|f\|_{\beta(K)} = \sup_{x,y\in K} \frac{|f(x)-f(y)|}{|x-y|^{\beta}}<\infty
\] 
for any compact set $K \subset {\mathbf{R}}^d$.
We also use the following classes of H\"older continuous %insert "continuous"
functions:
\begin{itemize}
\item[$\bullet$] 
$\dis{C^{\beta}=C^{\beta}({\mathbf{R}}^d) \ni f \quad \Longleftrightarrow  \quad 
\|f\|_{\beta} = \sup_{|x-y|\le 1} 
\frac{|f(x)-f(y)|}{|x-y|^{\beta}}<\infty.}$
\item[$\bullet$] $\dis{C_K^{\beta}=C_{\rm loc}^{\beta} \cap C_K({\mathbf{R}}^d).}$
\end{itemize}
Clearly the following inclusions hold:
\[
C^\beta_K \subset \tilde{C}^\beta \subset C^\beta \subset C^\beta_{\rm loc},
\]
where $\tilde{C}^\beta$ is the set of uniformly $\beta$-H\"older continuous 
functions. Note that $C^{\beta_1}_{\rm loc}$ is a proper subspace of 
$C^{\beta_2}_{\rm loc}$ if $\beta_1 > \beta_2$. Of course when $\beta=1$, 
%insert "when $\beta=1"
$C^{\beta}_{\rm loc}$ is nothing but the space of (local) Lipschitz 
continuous functions, which will be denoted by $C^{\rm lip}_{\rm loc}$.
%erase "when $\beta=1"

%%%%%%%%%%%%%%%%%%%%%%%%%%%%%%
%%%%%   Section(Liouville for 2\le p<\infty)   %%%%%
%%%%%%%%%%%%%%%%%%%%%%%%%%%%%%

\section{$L^p$-Liouville Property for $2\le p<\infty$}
\label{LP}
In this section we prove our main theorem (Theorems \ref{MT1}) for the case 
$2\le p<\infty$. Before starting the proof, we recall %erase: establish 
a simple lemma:
%%%%%%%%%%%%%%%%%%%%%
%%%%%   Lemma   %%%%%
%%%%%%%%%%%%%%%%%%%%%
\begin{lemma}  If $2\le p<\infty$, then we have
\[
|x-y|^p \le (x-y)(x^{p-1}-y^{p-1}) \mbox{ for every $x,y \in {\mathbf R}$ with $x,y\ge 0.$}\]
\label{lemma:elementary}
\end{lemma}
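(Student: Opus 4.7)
The plan is to reduce the inequality to a scalar statement of the form $a^{p-1}+b^{p-1}\le (a+b)^{p-1}$ for $a,b\ge 0$ and $p-1\ge 1$, which is the superadditivity of the power function $t\mapsto t^{p-1}$ on $[0,\infty)$ when the exponent exceeds $1$.

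First, I would observe that the inequality is symmetric under the swap $x\leftrightarrow y$ (both sides change sign together), so we may assume $x\ge y\ge 0$. If $x=y$ both sides are zero, so take $x>y\ge 0$ and divide through by the positive factor $x-y$; the claim then becomes
\[
(x-y)^{p-1}\le x^{p-1}-y^{p-1}.
\]
Setting $a:=x-y\ge 0$ and $b:=y\ge 0$, this is equivalent to
\[
a^{p-1}+b^{p-1}\le (a+b)^{p-1},\qquad a,b\ge 0,\ p-1\ge 1.
\]

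To prove this scalar inequality, I would give the standard normalization argument: the case $a+b=0$ is trivial, and otherwise dividing by $(a+b)^{p-1}$ reduces the claim to showing $t^{p-1}+(1-t)^{p-1}\le 1$ for $t\in[0,1]$, where $t=a/(a+b)$. Since $p-1\ge 1$ and $t,1-t\in[0,1]$, we have $t^{p-1}\le t$ and $(1-t)^{p-1}\le 1-t$, and adding these yields the bound. Alternatively, one can fix $y\ge 0$ and consider $F(x):=x^{p-1}-y^{p-1}-(x-y)^{p-1}$ on $[y,\infty)$; then $F(y)=0$ and $F'(x)=(p-1)\bigl(x^{p-2}-(x-y)^{p-2}\bigr)\ge 0$, since $x\ge x-y\ge 0$ and the exponent $p-2\ge 0$ makes $t\mapsto t^{p-2}$ nondecreasing. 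Hence $F\ge 0$ on $[y,\infty)$.

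There is no real obstacle here; the lemma is elementary. The only subtlety worth recording is the role of the hypothesis $p\ge 2$: it is used precisely to ensure $p-2\ge 0$ in the monotonicity argument (or equivalently $p-1\ge 1$ in the concavity/superadditivity argument), and the inequality fails for $1<p<2$, which is in fact the very reason the authors must develop the alternative machinery in Section \ref{Section;LPII}.
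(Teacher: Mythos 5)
Your proof is correct, and the underlying reduction is the same as the paper's: after disposing of the trivial cases you divide by $x-y$ and land on the superadditivity of $t\mapsto t^{p-1}$ on $[0,\infty)$ for $p-1\ge 1$, which is exactly the one-variable inequality the paper isolates (there obtained by factoring out $y^{p-1}(x-y)$ with $t=x/y$ and restated as $t^{p}-1-(t-1)^{p}\ge 0$ for $t\ge 1$ and all $1<p<\infty$). The difference is in how the scalar inequality is finished: the paper differentiates $f(t)=t^{p}-1-(t-1)^{p}$ and checks monotonicity, whereas your primary argument normalizes by $(a+b)^{p-1}$ and uses only the pointwise bound $t^{p-1}\le t$ for $t\in[0,1]$, which is calculus-free and slightly cleaner; your alternative argument via $F'(x)=(p-1)\bigl(x^{p-2}-(x-y)^{p-2}\bigr)\ge 0$ is essentially the paper's computation. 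Your closing observation that the inequality reverses for $1<p<2$ (subadditivity of $t\mapsto t^{p-1}$) is accurate and correctly identifies why the paper must resort to the devices of Section~\ref{Section;LPII} in that range.
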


\begin{proof}Since the inequality holds as the equality when $p=2$,
we show it only for $2<p<\infty$. Assume $x>y>0$. 
Then
\[
(x-y)(x^{p-1}-y^{p-1})-(x-y)^p=y^{p-1}(x-y)\Bigg\{ 
\left(\frac{x}{y}\right)^{p-1}-1 -\left(\frac xy -1\right)^{p-1}\Bigg\}.
\]
Thus, it is enough to show: 
\[
t^p-1- (t-1)^p \ge 0 \mbox{ for every $ t\ge 1$ and $1<p<\infty$.}
\]
Setting $f(t)$ as the left-hand side of the above, it follows:
\[
f'(t)=p t^{p-1}-p(t-1)^{p-1}
=p t^{p-1} \left(1-(1-1/t)^{p-1}\right)>0 \mbox{ for $t>1$.}
\]
Hence, $f(t)$ is strictly increasing on $[1,\infty)$ and $f(1)=0$,
and thus, $f(t)\ge 0$ for $t\ge 1$.
\end{proof}

%%%%%%%%%%%%%%%%%%%%%%%
%%%%%   Theorem   %%%%%
%%%%%%%%%%%%%%%%%%%%%%%
Now we prove:  %erase: the Main theorem $(ii)$:
\begin{theorem}[$L^p$-Liouville property for $2\le p<\infty$] \label{MT1}
Let $2\le p< \infty$ and $q$ be its conjugate.
If $M_{q}<\infty$, 
then any nonnegative $\form$-subharmonic function 
$f \in C \cap \dom_{\rm loc} \cap L^p$ is  constant.
\end{theorem}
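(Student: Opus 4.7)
The proof follows the classical Moser--Sturm strategy, adapted to the non-local setting. I would feed the test function $\varphi = \chi_R^{2} f^{p-1}$ into the $\form$-subharmonicity inequality, where $\chi_R \in C_K^{\rm lip}$ is a standard cut-off with $0 \le \chi_R \le 1$, $\chi_R \equiv 1$ on $B(R)$, $\mathrm{supp}(\chi_R) \subset B(2R)$, and Lipschitz constant $\le C/R$. Since $f$ is non-negative and continuous (hence bounded on $\overline{B(2R)}$) and $f \in \dom_{\rm loc}$, truncating a local representative of $f$ between $0$ and $\sup_{B(2R)} f$ produces a bounded element of $\dom$ agreeing with $f$ on $B(2R)$; taking $(p-1)$-th powers and multiplying by $\chi_R^{2}$ gives $\varphi \in \mathcal{C} = C_K \cap \dom$ with $\varphi \ge 0$, so $\form(f,\varphi) \le 0$.

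Next I would invoke the integral derivation property (Proposition \ref{prop:deri}) with $g = f^{p-1}$ and $h = \chi_R^{2}$. The inclusion $f^{p-1} \in \dom_{\rm loc} \cap L^{q}$ follows from $f \in L^p$ and the same truncation argument, using that $t \mapsto t^{p-1}$ is Lipschitz on bounded intervals. This yields
\[
\form(f, \chi_R^{2} f^{p-1}) = \int \chi_R^{2}\,\Gamma(f, f^{p-1})\,dm + \int f^{p-1}\,\Gamma(f, \chi_R^{2})\,dm.
\]
Lemma \ref{lemma:elementary} gives $\Gamma(f,f^{p-1})(x) \ge \int |f(x)-f(y)|^{p}\mu(x,dy)$, so combining with $\form(f,\varphi) \le 0$ produces the basic estimate
\[
\iint \chi_R^{2}(x)\,|f(x)-f(y)|^{p}\,\mu(x,dy)m(dx) \le -\int f^{p-1}\,\Gamma(f, \chi_R^{2})\,dm.
\]

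To control the right-hand side I would use a weighted Young inequality. Factoring $|\chi_R^{2}(x)-\chi_R^{2}(y)| = |\chi_R(x)-\chi_R(y)|(\chi_R(x)+\chi_R(y))$, split the integrand $f^{p-1}(x)|f(x)-f(y)||\chi_R^{2}(x)-\chi_R^{2}(y)|$ as $A\cdot B$, where
\[
A = |f(x)-f(y)|\bigl(\chi_R(x)+\chi_R(y)\bigr)^{2/p}, \qquad
B = f^{p-1}(x)|\chi_R(x)-\chi_R(y)|\bigl(\chi_R(x)+\chi_R(y)\bigr)^{(p-2)/p}.
\]
Applying $AB \le \epsilon A^{p} + C_{\epsilon} B^{q}$, the $A^{p}$ piece is bounded, via $(\chi_R(x)+\chi_R(y))^{2} \le 2(\chi_R^{2}(x)+\chi_R^{2}(y))$ and the symmetry of $\mu(x,dy)m(dx)$, by $4$ times the left-hand side, and can be absorbed for $\epsilon$ small. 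Meanwhile, since $(\chi_R(x)+\chi_R(y))^{q(p-2)/p} = (\chi_R(x)+\chi_R(y))^{(p-2)/(p-1)}$ is bounded by $2$ when $p \ge 2$, the $B^{q}$ piece is controlled by a constant multiple of $\iint f^{p}(x)|\chi_R(x)-\chi_R(y)|^{q}\mu(x,dy)m(dx)$.

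Finally I would let $R \to \infty$ after showing this residual integral vanishes. Split the $y$-integration: on $\{|x-y|\le 1\}$ the Lipschitz bound gives $|\chi_R(x)-\chi_R(y)|^{q} \le (C/R)^{q}(1 \wedge |x-y|^{q})$, so the contribution is at most $(C/R)^{q} M_{q}\|f\|_{L^{p}}^{p} \to 0$; on $\{|x-y|>1\}$ the integrand is dominated by $2^{q} f^{p}(x)\mathbf{1}_{|x-y|>1}$, which is $\mu(x,dy)m(dx)$-integrable by $M_{q}<\infty$ and $f \in L^{p}$, while $|\chi_R(x)-\chi_R(y)| \to 0$ pointwise, so dominated convergence closes it. Monotone convergence on the left then yields $\iint |f(x)-f(y)|^{p}\mu(x,dy)m(dx) = 0$; consequently the non-negative lower-semicontinuous function $x \mapsto \int |f(x)-f(y)|^{p}\mu(x,dy)$ vanishes identically on $\mathbf{R}^d$, so $\mathrm{supp}(\mu(x,\cdot)) \subset \{y : f(y) = f(x)\}$ for every $x$, and the irreducibility assumption (i) combined with continuity of $f$ and connectedness of $\mathbf{R}^d$ forces $f$ to be identically constant. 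The main obstacle is the choice of the weight $(\chi_R(x)+\chi_R(y))^{2/p}$ in Young's inequality: it is precisely what allows the error term to be absorbed into the left-hand side, and the requirement $(p-2)/p \ge 0$ is exactly where the restriction $p \ge 2$ becomes essential.
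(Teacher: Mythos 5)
Your proposal is correct and follows essentially the same route as the paper: the same test function $\chi^2 f^{p-1}$, the same appeal to the integral derivation property (Proposition \ref{prop:deri}) and to Lemma \ref{lemma:elementary}, and the same dominated-convergence limit using $M_q<\infty$ and $f\in L^p$; your weighted Young inequality with absorption is just the standard cosmetic variant of the paper's H\"older-inequality-then-divide step (both require the a priori finiteness of $\int \chi^2\,\Gamma(f,f^{p-1})\,dm$, which the derivation property supplies), and the exponent bookkeeping $(p-2)/p\ge 0$ matches the paper's use of $\chi_n\le\chi_n^{2/p}$. The one small slip is the claim that $x\mapsto\int|f(x)-f(y)|^p\mu(x,dy)$ is lower semicontinuous --- $\mu(x,\cdot)$ is only assumed Borel measurable in $x$, so you obtain its vanishing only for $m$-a.e.\ $x$ --- but this is harmless, since the $m$-a.e.\ statement combined with the continuity of $f$ and assumption (i) is exactly what the paper itself uses to conclude constancy.
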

\begin{proof}
Let $f \in C({\mathbf{R}}^d) \cap \dom_{{\rm loc},p} (\subset {\mathcal H})$ be a
nonnegative $\form$-subharmonic function. Note that $1<q\le 2$ 
and $M_2 \le M_{q}<\infty$, since $2\le p<\infty$. Recall that 
${\mathcal C}=C_K({\mathbf{R}}^d) \cap \dom \subset \dom_{K,q}$ and note 
that $f^{p-1} \in \dom_{{\rm loc},q} \cap C({\mathbf{R}}^d)$.  
Take a sequence of cut-off functions 
$\{ \chi_n\}_{n \in {\mathbf{N}}} \subset C_K^{\rm lip}$ satisfying the following conditions:
\[0 \le \chi_n \nearrow 1 \ \ {\rm as} \ n\to \infty,\quad
|\chi_n(x)-\chi_n(y)| \le |x-y| \ \ {\rm for} \ \ x,y \in {\mathbf{R}}^d.\]
Since $2\le p<\infty$, the function $\chi^2_n f^{p-1}$ is 
non-negative and belongs to ${\mathcal C}$. 
So, we may apply the integral derivation property (Proposition \ref{prop:deri}),
and it follows that
\begin{align*}
0 &\le -\form(f,\chi_n^2 f^{p-1}) \\  
&= \dis{-\int \Gamma(f, \chi_n^2  f^{p-1})(x)\, m(dx)  } \\  
&= -\dis{ \int \chi^2_n(x) \Gamma(f,f^{p-1})\, m(dx) - \int f^{p-1} 
\Gamma(f, \chi^2_n)\, m(dx)}
\end{align*}
for any integer $n>0$. 
Moreover,  by Lemma \ref{lemma:elementary},
\begin{align*}
&\int \chi^2_n(x) \Gamma(f,f^{p-1})\, m(dx) \\ 
=&\iint_{y\not=x} \chi^2_n(x) \left( f(x)-f(y) \right)
\bigl(f^{p-1}(x)-f^{p-1}(y)\bigr) \mu(x,dy)m(dx) \\  
\ge& \iint_{x \neq y} \chi^2_n (x) |f(x)-f(y)|^p \, \mu(x,dy)m(dx) \ge 0.
\end{align*}
Thus,\begin{equation} \label{Eq;3.1}
\dis{0 \le \int \chi^2_n (x) \Gamma(f,f^{p-1})(x) \, m(dx) \le 
\Biggl| \int f^{p-1}(x) \Gamma(f, \chi^2_n)(x) \, m(dx) \Biggr|. }
\end{equation}
We estimate the right-hand side of (\ref{Eq;3.1}). 
Since the assumption $p\ge 2$ implies $2/p \le 1$ and
$0\le \chi_n(x) \le \chi_n(x)^{2/p} \le 1$, we have that 
\begin{align*}
&\dis{ \Bigl| \int f^{p-1}(x) \Gamma(f, \chi_n^2)(x) \, m(dx) \Bigr| } \\ 
\le& \dis{ \iint_{x\not=y} f^{p-1}(x)\bigl| f(x)-f(y)\bigr| 
   \bigl| \chi_n^2(x)-\chi_n^2(y)\bigr| \mu(x,dy)m(dx) } \\  
\le &\dis{  \iint_{x\not=y} f^{p-1}(x) \chi_n(x) \bigl|f(x)-f(y)\bigr| 
   \bigl|\chi_n(x)-\chi_n(y)\bigr| \mu(x,dy)m(dx) } \\  
&\dis{ +\iint_{x\not=y} f^{p-1}(x) \chi_n(y)\bigl|f(x)-f(y)\bigr| 
   \bigl|\chi_n(x)-\chi_n(y)\bigr| \mu(x,dy)m(dx) } \\  
\le &\dis{  \iint_{x\not=y} \bigl(f^{p-1}(x)   |\chi_n(x)-\chi_n(y)| \bigr) 
   \chi^{2/p}_n(x)  |f(x)-f(y)|  \mu(x,dy)m(dx) } \\  
& \dis{ +  \iint_{x\not=y} \bigl(f^{p-1}(x)   |\chi_n(x)-\chi_n(y)| \bigr)
   \chi^{2/p}_n(y)  |f(x)-f(y)| \mu(x,dy)m(dx) } \\  
\le &\dis{  \left( \iint_{x\not=y} \bigl(f^{p-1}(x)   
  |\chi_n(x)-\chi_n(y)| \bigr)^q  \mu(x,dy)m(dx) \right)^{1/q} } \\ 
&\dis{\times \left( \iint_{x\not=y}\bigl(\chi^{2/p}_n(x) 
   |f(x)-f(y)| \bigr)^p \mu(x,dy)m(dx) \right)^{1/p}  } \\ 
&\dis{+  \left( \iint_{x\not=y} \bigl(f^{p-1}(x)   
  |\chi_n(x)-\chi_n(y)| \bigr)^q  \mu(x,dy)m(dx) \right)^{1/q}  } \\   
&\dis{\times \left( \iint_{x\not=y}\bigl(\chi^{2/p}_n(y) 
   |f(x)-f(y)| \bigr)^p \mu(x,dy)m(dx) \right)^{1/p}  } \\ 
\le &\dis{ 2  \left( \iint_{x\not=y}  f^p(x) {}  
|\chi_n(x){-}\chi_n(y)|^q  \mu(x,dy)m(dx) \right)^{1/q}  } \\  
&\dis{\times \left( \iint_{x\not=y} \chi^2_n(x) {} 
|f(x){-}f(y)|^p \mu(x,dy)m(dx) \right)^{1/p},  } 
\end{align*}
where, 
we used the H\"older inequality, changing the variables 
$x\leftrightarrow y$, and the symmetry of $\mu(x,dy)m(dx)$
in the second integral of the most right-hand side. 

Applying Lemma \ref{lemma:elementary}, 
the second integral of the most right-hand side is dominated by 
\begin{align*}
&\iint_{x\not=y} \chi_n(x)^2 \bigl(f(x)-f(y)\bigr)
\bigl(f^{p-1}(x)-f^{p-1}(y)\bigr) \mu(x,dy)m(dx) \\ = & \int \chi_n(x)^2 
\Gamma(f,f^{p-1})\, m(dx).
\end{align*}
Thus
\begin{align*}
&\dis{\Bigl|  \int f^{p-1}(x) \Gamma(f, \chi_n^2)(x)\, m(dx) \Bigr| } \\
\le&2  \dis{ \left( \iint_{x\not=y} f^p(x) 
\bigl|\chi_n(x)-\chi_n(y)\bigr|^q \mu(x,dy)m(dx) \right)^{1/q}  } \\
& \dis{ \times \left( \int \chi_n(x)^2 \Gamma(f,f^{p-1})\, m(dx) \right)^{1/p}. } 
\end{align*}
Hence, by (\ref{Eq;3.1}) we have the following:
\begin{align*}
&\int \chi^2_n(x) \Gamma(f,f^{p-1})\, m(dx) \\ 
\le 2  & \left( \iint_{x\not=y} f^p(x) \bigl|\chi_n(x)-\chi_n(y)\bigr|^q 
\mu(x,dy)m(dx) \right)^{1/q} \\ 
\times &\left( \iint \chi^2_n(x) \Gamma(f,f^{p-1})\, 
m(dx) \right)^{1/p},
\end{align*}
and thus
\[
\int \chi^2_n(x) \Gamma(f,f^{p-1})\, m(dx) 
\le 2^q \iint_{x\not=y} \hspace{-10pt} 
f^p(x) \bigl|\chi_n(x)-\chi_n(y)\bigr|^q 
\mu(x,dy)m(dx). 
\]
Since $\chi_n \nearrow1$, $f^p(x) 
\bigl|\chi_n(x)-\chi_n(y)\bigr|^q$ tends to $0$ for 
$\mu(x,dy)m(dx)$%insert 
-a.e.\ $(x,y) \in {\mathbf{R}}^d\times {\mathbf{R}}^d$ as $n\to\infty$. Moreover 
\[
f^p(x) \bigl|\chi_n(x)-\chi_n(y)\bigr|^q \le f^p(x) \left(
1\wedge |x-y|^q \right)
\]
where the right-hand side is integrable with respect to 
$\mu(x,dy)m(dx)$ %insert "d" i.e. \mu(x,y) --> \mu(x,dy) 
due to the assumption $M_{q} < \infty$ together with the fact that $f\in L^p$. 
Applying the Lebesgue theorem and the Fatou's lemma, it follows that
\begin{align*}
&\dis{\int \Gamma(f,f^{p-1})\, m(dx)} \\  
\le &\dis{ \liminf_{n\to\infty} \int \chi_n(x)^2 \Gamma(f,f^{p-1})\, m(dx)}  \\ 
\le &\dis{ 4 \lim_{n\to\infty} \iint_{x\not=y} 
f(x)^p \bigl| \chi_n(x)-\chi_n(y)\bigr|^q \mu(x,dy)m(dx) =0,} 
\end{align*}
where the most left-hand side is non-negative by Lemma 
\ref{lemma:elementary}. Hence 
\[\dis{
\Gamma(f,f^{p-1})(x) =\int_{y\not=x}
\bigl(f(x)-f(y)\bigr)\bigl(f^{p-1}(x)-f^{p-1}(y)\bigr) \mu(x,dy)= 0}\]
for $m\mbox{-a.e.}\ x\in{\mathbf{R}}^d.$
Finally, since $\bigl(f(x)-f(y)\bigr)\bigl(f^{p-1}(x)-f^{p-1}(y)\bigr) \ge 0$ 
for every $(x,y) \in {\mathbf{R}}^d \times {\mathbf{R}}^d$, we arrive at the conclusion
because $M_q<\infty$ and $f$ is continuous.
\end{proof}

%%%%%%%%%%%%%%%%%%%%%%%%%%%%%%%%%%%%%%%%%%%%%%
%%%%%   Section(Liouville: case 1<p<2)   %%%%%
%%%%%%%%%%%%%%%%%%%%%%%%%%%%%%%%%%%%%%%%%%%%%%
\section{$L^p$-Liouville Property for $1<p<2$}\label{Section;LPII}

Let $1<p<2$ and assume $M_1<\infty$. Under this setting, we will show two 
sorts of $L^p$-Liouville properties; first with truncated jump kernel,
secondly, for H\"older continuous subharmonic functions.

We can not apply directly the proof of the Liouville property with $2\le p<\infty$.
Because the function $f^{p-1}$ may not belong 
to $\dom_{\rm loc}$ if $1<p<2$; accordingly, we can not use 
$\chi_n^2 f^{p-1}$ as a test function. 
In the diffusion (local) case, Sturm \cite{S94} considered
(the original idea is due to Yau \cite{Y76})
the function $(f+\varepsilon)^{p-1}$, which belongs to $\dom_{\rm loc}$, 
instead of $f^{p-1}$ with $\varepsilon>0$, and followed the line of the proof with 
$2 \le p <\infty$. Then he completed the proof by letting $\varepsilon \to 0$. 

The crucial fact which lets him carry out this proof is that a local form 
$\form$ can be extended to the space $\dom_{\rm loc} \times 
\bigl(C_K({\mathbf{R}}^d)\cap \dom\bigr)$.
A non-local form $\form$ is in general %insert "in general"
not extendable to this space. %erase: it in general, insert "this space"
Nevertheless, we will show the Liouville property by 
modifying the jump kernel or the definition of a harmonic function.

%%%%%%
%%%%% subsection (Trancated jump kernel)  %%%%%
%%%%%
\subsection{Truncated Jump Kernel}

In this subsection, we will show the $L^p$-Liouville property 
for the kernel $\mu$ of the following type:
\begin{equation}
\mu(x,dy)={\bf 1}_{|x-y|\le R} \, \mu(x,dy)
\label{kernel}
\end{equation}
with some $R>0$ for every $x\in{\mathbf{R}}^d$. Namely, the measure $\mu(x,dy)$ 
is supported in  $\{ y : |x-y|\le R\}$ for each $x\in{\mathbf{R}}^d$. 
Note that the truncate constant $R>0$ is not essential in the following 
arguments and we let $R=1$ for the sake of simplicity.

We first show that the form $\form$ can be extended to the space
$\dom_{\rm loc} \times \bigl(C_K \cap \dom\bigr)$ or 
$\bigl(C_K \cap \dom \bigr) \times \dom_{\rm loc}$. 

\begin{lemma} (c.f. Lemma \ref{lemma:integral})  Let $\mu(x,dy)$ be 
a kernel that satisfies Condition (\ref{kernel}).
If $M_{\alpha}<\infty$ with some $0<\alpha\le 2$, then the following
integral converges:
\[
\iint_{x\not=y} \bigl|f(x)-f(y)\bigr| \bigr|g(x)-g(y)\bigr| 
\mu(x,dy)m(dx)<\infty
\]
for any $f\in \dom_{\rm loc}$ and $g\in C_K \cap \dom$.
Namely,
\[
\form(f,g)=\form(g,f)=\iint_{x\not=y} \bigl(f(x)-f(y)\bigr) 
\bigr(g(x)-g(y)\bigr) \mu(x,dy)m(dx)
\]
makes sense for such a pair $f$ and $g$. 
\label{lemma:integral-2}
\end{lemma}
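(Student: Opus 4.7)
The plan is to exploit the truncation of the kernel. Because $\mu(x,\cdot)$ is supported in the unit ball $\{y : |x-y|\le 1\}$, the integrand vanishes whenever $|x-y| > 1$; and because $g$ is compactly supported, $|g(x)-g(y)|$ vanishes unless at least one of $x, y$ lies in ${\rm supp}[g]$. Together these two facts confine the whole integral to a bounded neighborhood of ${\rm supp}[g]$ inside ${\mathbf{R}}^d \times {\mathbf{R}}^d$. Once integration is localized, the hypothesis $f\in \dom_{\rm loc}$ lets me replace $f$ by a genuine element of $\dom$ and finish with a single Cauchy--Schwarz. This is precisely the step that in Lemma \ref{Lem;2.2} required the additional $L^p$ hypothesis on $f$ in order to control long-range jumps; here the truncation of $\mu$ eliminates that long-range contribution at the outset, so no integrability hypothesis on $f$ is needed.

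Concretely, I would first fix $r>0$ with ${\rm supp}[g] \subset B(r)$, set $R = r+1$, and pick $f_R \in \dom$ with $f=f_R$ $m$-a.e.\ on $B(R)$. I then split the integration domain $\{x \ne y\}$ into the four subregions obtained by placing each of $x$ and $y$ in $B(R)$ or its complement. The next step is to verify that the three ``peripheral'' regions contribute zero. On $B(R) \times B(R)^c$, the support property of $\mu(x,\cdot)$ forces $|x-y|\le 1$, which together with $|y|>R=r+1$ gives $|x|>r$, so both $g(x)=0$ and $g(y)=0$; the region $B(R)^c\times B(R)$ is handled symmetrically via the symmetry of $\mu(x,dy)m(dx)$, and on $B(R)^c \times B(R)^c$ the values $g(x)$ and $g(y)$ vanish directly.

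On the remaining region $B(R)\times B(R)\setminus D$ we have $f=f_R$ $m$-a.e., so Cauchy--Schwarz gives
\[
\iint_{B(R)\times B(R)\setminus D} |f(x)-f(y)|\,|g(x)-g(y)|\,\mu(x,dy)m(dx) \le \sqrt{\form(f_R,f_R)\,\form(g,g)} < \infty,
\]
which is the desired finiteness; the identity $\form(f,g)=\form(g,f)$ then follows from the symmetry of $\mu(x,dy)m(dx)$. The only place that requires care is the bookkeeping that shows the integrand vanishes in each of the three peripheral regions, and this is the main (and only minor) obstacle. No estimate finer than Cauchy--Schwarz is needed, and in particular neither inequality (\ref{est-1}) nor any $L^p$ control on $f$ enters the argument, in contrast with the proof of Lemma \ref{Lem;2.2}.
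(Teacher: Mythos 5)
Your proof is correct and follows essentially the same route as the paper: the truncation of $\mu$ together with the compact support of $g$ localizes the double integral to a bounded region where $f$ can be replaced by some $f_R\in\dom$, and a single Cauchy--Schwarz then bounds everything by $\sqrt{\form(f_R,f_R)\,\form(g,g)}$. The only (cosmetic) difference is that you organize the localization as an explicit four-region decomposition, whereas the paper collapses it in one step by taking $f=f_R$ on the slightly larger ball $B(R+1)$ and noting directly that the integrand is supported in $B(R)\times\{0<|x-y|\le 1\}$.
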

\begin{proof}Let $f\in \dom_{\rm loc}$ and $g\in C_K({\mathbf{R}}^d)\cap \dom$. 
Let $R,r>0$ such that ${\rm supp}[g]\subset B(r) \subset B(R)$ with $R-r\ge 1$.
Take $f_R\in \dom$ such that $f=f_R \ m$-a.e.\ on $B(R+1)$.
Since the support of the measure $\mu(x,y)m(dx)$ is included in $\{|x-y|\le 1\}$,
it follows that
\begin{align*}
 &\dis{
\iint_{x\not=y} \bigl|f(x)-f(y)\bigr| \bigr|g(x)-g(y)\bigr| \mu(x,dy)m(dx)} \\ 
=&\dis{\int_{B(R)} \int_{0<|x-y|\le 1} \bigl|f_R(x)-f_R(y)\bigr| 
\bigr|g(x)-g(y)\bigr| \mu(x,dy)m(dx)} \\ 
\le &\dis{\left(\int_{B(R)} \int_{0<|x-y|\le 1}\bigl(f_R(x)-f_R(y)\bigr)^2 
\mu(x,dy)m(dx) \right)^{1/2}  }  \\ 
& \quad \times \dis{\left(\int_{B(R)} \int_{0<|x-y|\le 1}\bigl(g(x)-g(y)\bigr)^2
\mu(x,dy)m(dx) \right)^{1/2}  } \\ 
\le & \dis{ \left( \iint_{x\not=y}\bigl(f_R(x)-f_R(y)\bigr)^2 \mu(x,dy)m(dx) \right)^{1/2}
% } \\  &  \times 
 \left(\iint_{x\not=y}\bigl(g(x)-g(y)\bigr)^2\mu(x,dy)m(dx)\right)^{1/2} } \\ 
= &\sqrt{\form(f_R,f_R)}\sqrt{\form(g,g)} <\infty.
\end{align*}
%\[
%\begin{array}{l}
%\hspace{-20pt} \dis{
%\iint_{x\not=y} \bigl|f(x)-f(y)\bigr| \bigr|g(x)-g(y)\bigr| 
%\mu(x,dy)m(dx)} \\
%
%%=\dis{\int_{B(R)} \int_{0<|x-y|\le 1}\bigl|f_R(x)-f_R(y)\bigr| 
%\bigr|g(x)-g(y)\bigr| \mu(x,dy)m(dx)} \\
%
%%\le \dis{\sqrt{\int_{B(R)} \int_{0<|x-y|\le 1}\bigl(f_R(x)-f_R(y)\bigr)^2
%\mu(x,dy)m(dx) }  } \\
%
%%\quad \quad \times 
%\dis{\sqrt{\int_{B(R)} \int_{0<|x-y|\le 1}\bigl(g(x)-g(y)\bigr)^2
%\mu(x,dy)m(dx) }  } \\
%
%%\le \dis{ \sqrt{\iint_{x\not=y}\bigl(f_R(x)-f_R(y)\bigr)^2
%\mu(x,dy)m(dx) }  
%\sqrt{\iint_{x\not=y}\bigl(g(x)-g(y)\bigr)^2\mu(x,dy)m(dx) } } \\
%
%%= \sqrt{\form(f_R,f_R)  \form(g,g)} <\infty. \\
%\end{array}
%\]
\end{proof}

Now we prove the $L^p$-Liouville property with
$1<p<2$ for the truncated jump kernel.

%%%%%%%%%%%%%%%%%%%%%%%%%%%%%%%%%%%%%%%%%
%%%%%   Theorem (truncated kernel)   %%%%%
%%%%%%%%%%%%%%%%%%%%%%%%%%%%%%%%%%%%%%%%%
\begin{theorem} Let $1<p<2$ and $\mu(x,dy)$ be a jump kernel satisfying (\ref{kernel}).
If $M_1<\infty$, then any 
nonnegative $\form$-subharmonic function $f \in C \cap \dom_{{\rm loc},p} 
\bigl(=C \cap \dom_{\rm loc}\cap L^p \bigr)$ is constant.
\label{theorem:truncate}
\end{theorem}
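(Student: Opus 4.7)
The plan is to adapt the argument of Theorem~\ref{MT1}, replacing the test function $\chi_n^2 f^{p-1}$ by $\chi_n^2 G_\varepsilon$, where $G_\varepsilon := (f+\varepsilon)^{p-1} - \varepsilon^{p-1}$ with $\varepsilon>0$; this is the non-local analogue of the classical Yau--Sturm trick. For $1<p<2$, the map $t\mapsto (t+\varepsilon)^{p-1}-\varepsilon^{p-1}$ is Lipschitz on $[0,\infty)$ with constant $(p-1)\varepsilon^{p-2}$ and vanishes at $0$, so composing with the nonnegative continuous $f\in\dom_{\rm loc}$ yields $G_\varepsilon \in \dom_{\rm loc}\cap C$, which is moreover locally bounded and hence in $L^q_{\rm loc}$. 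The product $\chi_n^2 G_\varepsilon$ is then a non-negative element of $\mathcal{C}$, and $\form(f,\chi_n^2 G_\varepsilon)$ makes sense by Lemma~\ref{lemma:integral-2}. Subharmonicity of $f$ together with Corollary~\ref{Cor;DT} (whose hypotheses are tailored to the truncated kernel, with $(f,g,h)=(f,G_\varepsilon,\chi_n^2)$) give
\[
0 \;\ge\; \form(f, \chi_n^2 G_\varepsilon) \;=\; \int \chi_n^2\, \Gamma(f, G_\varepsilon)\, dm \;+\; \int G_\varepsilon\, \Gamma(f, \chi_n^2)\, dm.
\]

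The replacement for Lemma~\ref{lemma:elementary} is the elementary inequality
\[
(a-b)\bigl((a+\varepsilon)^{p-1} - (b+\varepsilon)^{p-1}\bigr) \;\ge\; \tfrac{4(p-1)}{p^2}\bigl((a+\varepsilon)^{p/2} - (b+\varepsilon)^{p/2}\bigr)^2 \qquad (a,b\ge 0),
\]
which yields the pointwise lower bound $\Gamma(f,G_\varepsilon)\ge c_p\,\Gamma(h_\varepsilon,h_\varepsilon)$ with $h_\varepsilon := (f+\varepsilon)^{p/2}$ and $c_p:=4(p-1)/p^2>0$. Combined with the displayed inequality this gives
\[
c_p \int \chi_n^2\, \Gamma(h_\varepsilon, h_\varepsilon)\, dm \;\le\; \int |G_\varepsilon|\,|\Gamma(f, \chi_n^2)|\, dm.
\]
To close the estimate, I would factor $\chi_n^2(x)-\chi_n^2(y)=(\chi_n(x)+\chi_n(y))(\chi_n(x)-\chi_n(y))$, convert $|f(x)-f(y)|$ into $|h_\varepsilon(x)-h_\varepsilon(y)|$ via the dual inequality $|a-b|\le C_p\bigl((a+\varepsilon)^{1-p/2}+(b+\varepsilon)^{1-p/2}\bigr)\bigl|(a+\varepsilon)^{p/2}-(b+\varepsilon)^{p/2}\bigr|$, and apply Cauchy--Schwarz on $\mu(x,dy)m(dx)$ to isolate a factor of $\bigl(\int \chi_n^2\,\Gamma(h_\varepsilon,h_\varepsilon)\, dm\bigr)^{1/2}$, which is then absorbed on the left. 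The surviving factor has the form $\iint(f+\varepsilon)^{\gamma}|\chi_n(x)-\chi_n(y)|^2\,\mu(x,dy)m(dx)$; since the kernel is supported on $|x-y|\le 1$, the Lipschitz property of $\chi_n$, the assumption $M_1<\infty$, and $f\in L^p$ combine with dominated convergence to force this factor to $0$ as $n\to\infty$.

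Letting $n\to\infty$ we obtain $\int \Gamma(h_\varepsilon, h_\varepsilon)\, dm = 0$, i.e.\ $h_\varepsilon(x)=h_\varepsilon(y)$ for $\mu(x,dy)m(dx)$-a.e.\ $(x,y)$; assumption~(i) on $\mu$ combined with the continuity of $f$ propagates this to global constancy of $h_\varepsilon$, hence of $f$. (No $\varepsilon\to 0$ step is actually needed.) The main obstacle is the H\"older/Cauchy--Schwarz bookkeeping in the middle step: one must simultaneously isolate the factor $\bigl(\int \chi_n^2\,\Gamma(h_\varepsilon,h_\varepsilon)\,dm\bigr)^{1/2}$ (so that it can be absorbed on the left) \emph{and} keep the remaining weight $(f+\varepsilon)^{\gamma}$ integrable against $\mu(x,dy)m(dx)$. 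The truncation of the kernel is precisely what makes this possible: it confines every double integral to the region $|x-y|\le 1$ where $|\chi_n(x)-\chi_n(y)|\le|x-y|$ is bounded and where $M_1<\infty$ dominates the weight, exactly the point at which the non-local argument would fail without the support hypothesis on $\mu$.
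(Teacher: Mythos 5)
Your architecture (test function $\chi_n^2\times(\hbox{regularization of }f^{p-1})$, derivation property via Corollary \ref{Cor;DT}, a pointwise lower bound replacing Lemma \ref{lemma:elementary}, then killing the boundary term as $n\to\infty$) is the right one, and the Stroock--Varopoulos-type inequality giving $\Gamma(f,G_\varepsilon)\ge c_p\,\Gamma(h_\varepsilon,h_\varepsilon)$ is a legitimate substitute for Lemma \ref{lemma:elementary}. The gap is in the last convergence step. After your Cauchy--Schwarz absorption the surviving factor is of the form
\[
\iint_{0<|x-y|\le 1}(f(x)+\varepsilon)^{p}\,\bigl|\chi_n(x)-\chi_n(y)\bigr|^{2}\,\mu(x,dy)m(dx),
\]
since the dual inequality turns the weight $|G_\varepsilon(x)|\,(f(x)+\varepsilon)^{1-p/2}$ into $(f(x)+\varepsilon)^{p/2}$, whose square is $(f+\varepsilon)^p$ (the cross term carrying $(f(y)+\varepsilon)^{1-p/2}$ is no better). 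With $\varepsilon>0$ \emph{fixed}, dominated convergence does not apply: the natural dominating function $(f+\varepsilon)^p\,(1\wedge|x-y|)$ is not $\mu(x,dy)m(dx)$-integrable when $m(\mathbf{R}^d)=\infty$, because the constant part contributes $\varepsilon^p\iint(1\wedge|x-y|)\,\mu(x,dy)m(dx)=+\infty$. Concretely, the bound you obtain on the surviving factor is of order $\varepsilon^p\,M_1\,m(B(R_n))$, which diverges as $n\to\infty$ rather than tending to $0$. Subtracting $\varepsilon^{p-1}$ in $G_\varepsilon$ does not repair this, because the conversion of $|f(x)-f(y)|$ into $|h_\varepsilon(x)-h_\varepsilon(y)|$ reintroduces positive powers of $f+\varepsilon$.

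There are two ways to close this. The paper's way is to let $\varepsilon=\varepsilon_n\downarrow 0$ with $\varepsilon_n^p\,m(B(R_n))\le 1/n$, where $B(R_n)$ contains a $1$-neighbourhood of ${\rm supp}\,\chi_n$; then the offending constant contribution is $O(1/n)$, and a H\"older split with exponents $(q,p)$ (writing $|\chi_n(x)-\chi_n(y)|=|\chi_n(x)-\chi_n(y)|^{1/q}\,|\chi_n(x)-\chi_n(y)|^{1/p}$) makes the $(f+\varepsilon_n)$-factor uniformly bounded while the other factor tends to $0$, after which Fatou gives $\int\Gamma(f,f^{p-1})\,dm=0$. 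Alternatively, within your own scheme you can avoid converting to $h_\varepsilon$ on the right-hand side altogether: by concavity of $t\mapsto t^{p-1}$ one has $|G_\varepsilon|\le f^{p-1}$, and the same $(q,p)$-H\"older split then produces only the weight $f^{(p-1)q}=f^p\in L^1$, so the first factor is bounded and the second tends to $0$ by dominated convergence, with no absorption needed. As written, however, your limit $n\to\infty$ does not yield $0$, so the proof is incomplete at exactly the point where the choice of $\varepsilon$ matters.
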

\begin{proof}
Let $f \in C \cap \dom_{{\rm loc},p} (\subset {\mathcal H})$ be a
nonnegative and $\form$-subharmonic function. Similar to the proof of Theorem 
\ref{MT1}, take a sequence of cut-off functions  
$\{ \chi_n\}_{n=1}^{\infty} \subset C_K^{\rm lip}({\mathbf{R}}^d)$ such that
\[
0 \le \chi_n \nearrow 1 \ \ {\rm as} \ n\to \infty,\quad
|\chi_n(x)-\chi_n(y)| \le |x-y| \ \ {\rm for} \ \ x,y \in {\mathbf{R}}^d.
\]
We can easily show that $(f+\vareps)^{p-1}  \in \dom_{\rm loc}$ for any 
$\vareps>0$ by the inequality:
\[
\Bigl|\bigl(t+\vareps\bigr)^{p-1} -\bigl(s+\vareps\bigr)^{p-1}\Bigr| 
\le (p-1) \, \vareps^{p-2} \bigl|t-s\bigr| \mbox{ for  any $ t,s \ge 0$}.
\]
Take $R_n,r_n>0$ for each integer $n>0$ such that 
${\rm supp}[\chi_n] \subset B(r_n) \subset B(R_n)$ and $R_n-r_n\ge 1$. 
Set $\vareps_n= \bigl(n \cdot m(B(R_n))\bigr)^{-1/p}>0$. Then 
$\vareps_n \downarrow 0$ as $n\to\infty$. 
By making use of the derivation property (Corollary \ref{Cor;DT}), we have that
\begin{align*}
 0 \le & -\form(f,\chi_n^2 (f+\vareps_n)^{p-1}) \\
     = &\dis{-\int  \Gamma(f, \chi_n^2 (f+\vareps_n)^{p-1}) m(dx) }  \\  
= & \dis{-\int \chi_n^2(x) \Gamma(f, (f+\vareps_n)^{p-1}) m(dx) 
- \int (f(x)+\vareps_n)^{p-1} \Gamma(f, \chi_n^2 ) m(dx),  }
\end{align*}
hence, as in the case $2\le p<\infty$, we have:
\[
0\le \int \chi_n^2(x) \Gamma(f, (f+\vareps_n)^{p-1}) m(dx)
\le \Bigl| \int (f(x)+\vareps_n)^{p-1} \Gamma(f, \chi_n^2 ) m(dx) \Bigr|.
\]
We estimate the right-hand side as follows:
\begin{align*}
&\dis{ \Bigl| \int (f(x)+\vareps_n)^{p-1} \Gamma(f, \chi_n^2 ) m(dx) \Bigr| } \\  
\le &\dis{ \iint_{0<|x-y|\le 1} (f(x)+\vareps_n)^{p-1} 
   |f(x)-f(y)|  |\chi_n^2(x)-\chi_n^2(y) | \mu(x,dy)m(dx) } \\  
\le & \dis{ 2 \iint_{0<|x-y|\le 1}(f(x){+}\vareps_n)^{p-1}|\chi_n(x)-\chi_n(y) |^{1/q}  
   |f(x)-f(y)| } \\  
& \dis{ \times |\chi_n(x)-\chi_n(y) |^{1/p} \mu(x,dy)m(dx) } \\  
\le & \dis{2 \left(\iint_{0<|x-y|\le 1}  (f(x){+}\vareps_n)^{p}  
   |\chi_n(x)-\chi_n(y) |\mu(x,dy)m(dx) \right)^{1/q} } \\  
& \times \dis{\left( \iint_{0<|x-y|\le 1}  |f(x)-f(y)|^p 
|\chi_n(x)-\chi_n(y) |\mu(x,dy)m(dx) \right)^{1/p} } \\  
=&: 2 {\bf (I)_n}^{1/q}  {\bf (II)}_n^{1/p}.
\end{align*}
where we used the H\"older inequality in the last inequality. 

Since $R_n-r_n\ge 1$ and the support of $\chi_n$ is included in $B(r_n$), 
it follows that
\begin{align*}
{\rm (I)}_n =& \dis{ \int_{B(R_n)} \int_{0<|x-y|\le 1} 
(f(x){+}\vareps_n)^{p}  |\chi_n(x)-\chi_n(y) |\mu(x,dy)m(dx) } \\ 
\le& \dis{ 2^{p-1} \int_{B(R_n)} \int_{0<|x-y|\le 1} 
\left( f^p(x)+ \vareps_n^p \right)  |x-y|\mu(x,dy)m(dx) } \\ 
\le& \dis{ 2^{p-1} M_1 \left(\int_{B(R_n)} f^p(x) m(dx) 
+\vareps_n^p \mu(B(R_n)) \right) } \\ \le& \dis{ 2^{p-1} M_1 \bigr( \|f\|_{L^p}^p + 1 \bigr). }
\end{align*}
Hence, {\rm (I)}$_n$ are uniformly bounded in $n>0$. 
Next, we estimate {\rm (II)}$_n$. 
Applying the symmetry of 
the measure $\mu(x,dy)m(dx)$, 
\begin{equation}
\begin{array}{lcl} {\rm (II)}_n 
&\le& \dis{2^{p-1} \iint_{0<|x-y|\le 1}  
(f^p(x)+ f^p(y))|\chi_n(x)-\chi_n(y)| \mu(x,dy)m(dx) } \\
& \le & \dis{ 2^p \iint_{0<|x-y|\le 1} f^p(x) |\chi_n(x)-\chi_n(y)| 
 \mu(x,dy)m(dx). } \\ 
\end{array}
\label{truncate-conv}
\end{equation}
On the other hand,
\[f^p(x) |\chi_n(x)-\chi_n(y)|  \le f^p(x) |x- y|,\]
where the right-hand side is integrable with respect to $\mu(x,dy)m(dx)$. 
%%% changed the phrase --
Since $\chi_n$ converges to $1$ as $n\to\infty$, we find that 
$f^p(x) |\chi_n(x)-\chi_n(y)|$ converges to $1$ for $\mu(x,dy)m(dx)$-a.e. 
$(x,y) \in\mathbf{R}^d\times\mathbf{R}^d$.  
So, by the dominated convergence theorem, we find the right-hand side of 
(\ref{truncate-conv}) tends to $0$ as $n\to \infty$. 
%%% -- %%%
Thus by making use of Fatou's lemma, we have:
\begin{align*}
 \int \Gamma(f,f^{p-1})(x)m(dx) = & \int \lim_{n\to\infty} \chi_n(x)^2 
\Gamma(f,(f+\vareps_n)^{p-1})(x) m(dx) \\ 
\le &  \liminf_{n\to\infty} \int \chi_n(x)^2 
\Gamma(f,(f+\vareps_n)^{p-1})(x) m(dx)  \\ 
\le & \liminf_{n\to\infty} \Bigl| \int \bigl(f(x)+\vareps_n\bigr)^{p-1}
\Gamma(f,\chi_n^2)(x)m(dx)\Bigr| =0.  \\
\end{align*}
The rest of the proof is the same as the proof of Theorem \ref{MT1}.
\end{proof}

%%%%%%%%%%%%%%%%%%%%%%%%%%
%%%%%   subsection   %%%%%
%%%%%%%%%%%%%%%%%%%%%%%%%%
\subsection{H\"older continuous harmonic functions}

In this subsection, we will show the $L^p$-Liouvielle property
for %erase: the 
H\"older continuous harmonic functions. 

\smallskip
As in the Lemma \ref{lemma:integral}, we can extend the quadratic form 
$\form$ to $\bigl(C^{\beta_1}_{\rm loc} \cap L^p({\mathbf{R}}^d;m)\bigr)\times 
C_K^{\beta_2}$ or \\
{$C_K^{\beta_2}\times \bigl(C^{\beta_1}_{\rm loc} \cap 
L^p({\mathbf{R}}^d;m)\bigr)$} with $\beta_1+\beta_2 \ge \alpha$. 

We %erase: restrict the space ${\mathcal H}$ to
now set 
% the following space ${\mathcal H}(\gamma)$ instead of ${\mathcal H}$:
\[
\mathcal{H}(\gamma) = 
\{ 
u \in L^0: \ \form(u,\psi) \ \mbox{makes sense for all} \ \psi \in 
C^\gamma_K \}
\]
for $0<\gamma\le \alpha$.  As in the previous case, it follows:
\[
\dom \subset \dom_e \subset {\mathcal H}(\gamma) \quad {\rm and} \ \ 
\bigcup_{1\le p<\infty} \left(C^{\beta}_{\rm loc} 
\cap L^p({\mathbf{R}}^d;m)\right) \subset {\mathcal H}(\gamma),
\]
where $\beta+\gamma\ge \alpha$. 
We define the harmonic functions in this setting as follows:
\begin{definition} 
We call a  measurable function $f$ defined on ${\mathbf{R}}^d$
\emph{$\form$-subharmonic} ({\it $\form$-superharmonic}, respectively) 
with exponent $\gamma$ if $f\in {\mathcal H}(\gamma)$ and 
\[
\form(f,\varphi) \le 0 \quad (\form(f,\varphi)\ge 0, \mbox{respectively})
\]
for any $\varphi \in C_K^{\gamma}$ with $\varphi\ge 0$. 

Moreover, we call $f\in {\mathcal H}(\gamma)$ an
$\form$-{\it harmonic} function %insert "function"
with exponent $\gamma$ if it is both 
$\form$-subhamonic and $\form$-superharmonic with the same 
exponent $\gamma$. If we take $\gamma=1$, then we omit the phrase 
`with exponent $\gamma$'.
\end{definition}

%%%%%%%%%%%%%%%%%%%%%%%%%%%%%%%%%%%%%%%%%%%%%%%%%%%%%
%%%%%   Liouville proerty(1<p<2)  H\"older case %%%%%
%%%%%%%%%%%%%%%%%%%%%%%%%%%%%%%%%%%%%%%%%%%%%%%%%%%%%
\begin{theorem} \label{MT2} Let $1<p<2$ and $1/p \le \gamma \le 1$.
If $M_1<\infty$, then any non-negative $\form$-subharmonic 
function $f \in C^{\gamma} \cap L^p$ with exponent $\gamma$ is constant. 
\end{theorem}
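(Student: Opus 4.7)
The strategy follows the outline of the proof of Theorem~\ref{theorem:truncate}, with the truncated-kernel derivation property (Corollary~\ref{Cor;DT}) replaced by the H\"older version (Corollary~\ref{cor:deri}). Fix cutoff functions $\chi_n \in C_K^{\rm lip}$ with $\chi_n \nearrow 1$ and $|\chi_n(x)-\chi_n(y)| \le |x-y|$, and a sequence $\varepsilon_n \searrow 0$. The first delicate point is the test function: $(f+\varepsilon_n)^{p-1}$ lies in $C_{\rm loc}^\gamma$ but not in $L^q$ (it has the positive constant lower bound $\varepsilon_n^{p-1}$ on a space of infinite measure), so Corollary~\ref{cor:deri} does not apply to it directly. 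The plan is to work with
\[
g_n := (f+\varepsilon_n)^{p-1} - \varepsilon_n^{p-1}.
\]
Since $t \mapsto (t+\varepsilon_n)^{p-1}$ is Lipschitz on $[0,\infty)$ and $f \in C^\gamma \cap C$, $g_n \in C_{\rm loc}^\gamma$. Subadditivity of $t \mapsto t^{p-1}$ on $[0,\infty)$ for $p \in (1,2)$ gives $g_n \le f^{p-1}$, and because $(p-1)q = p$ we obtain $\|g_n\|_{L^q}^q \le \|f\|_{L^p}^p < \infty$. Thus $g_n \in C_{\rm loc}^\gamma \cap L^q$, and $\chi_n^2 g_n \in C_K^\gamma$ is non-negative. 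Note also that $\Gamma(f,g_n) = \Gamma(f,(f+\varepsilon_n)^{p-1})$, since $\Gamma$ annihilates additive constants.

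Subharmonicity with exponent $\gamma$ now yields $\form(f, \chi_n^2 g_n) \le 0$. Because $\gamma \ge 1/p > 1/2$ implies $2\gamma \ge 1$, we may invoke Corollary~\ref{cor:deri} with $\beta_1 = \beta_2 = \gamma$ (using $\alpha = 1$ and $M_1 < \infty$) to get
\[
0 \ge \form(f, \chi_n^2 g_n) = \int \chi_n^2(x)\,\Gamma(f, g_n)(x)\, m(dx) + \int g_n(x)\,\Gamma(f, \chi_n^2)(x)\, m(dx).
\]
Since $t\mapsto (t+\varepsilon_n)^{p-1}$ is increasing, $\Gamma(f,g_n) \ge 0$, so
\[
0 \le \int \chi_n^2 \Gamma(f,g_n)\, m \le \Bigl| \int g_n \Gamma(f,\chi_n^2)\, m \Bigr|.
\]
The right-hand side is handled by the same H\"older split as in the proof of Theorem~\ref{theorem:truncate}, writing $|\chi_n(x)-\chi_n(y)| = |\chi_n(x)-\chi_n(y)|^{1/q}\cdot |\chi_n(x)-\chi_n(y)|^{1/p}$ and using $|\chi_n^2(x)-\chi_n^2(y)| \le 2|\chi_n(x)-\chi_n(y)|$, giving
\[
\Bigl| \int g_n \Gamma(f,\chi_n^2)\, m \Bigr| \le 2\, A_n^{1/q} B_n^{1/p},
\]
where $A_n = \iint g_n^q(x)|\chi_n(x)-\chi_n(y)|\,\mu(x,dy)m(dx)$ and $B_n = \iint |f(x)-f(y)|^p|\chi_n(x)-\chi_n(y)|\,\mu(x,dy)m(dx)$. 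The bound $g_n^q \le f^p$ together with $|\chi_n(x)-\chi_n(y)| \le 1 \wedge |x-y|$ and $M_1 < \infty$ gives $A_n \le M_1 \|f\|_{L^p}^p$ uniformly in $n$; for $B_n$, the elementary inequality $|f(x)-f(y)|^p \le 2^{p-1}(f^p(x)+f^p(y))$ and the symmetry of $\mu(x,dy)m(dx)$ dominate the integrand by $2^{p-1}(f^p(x)+f^p(y))(1 \wedge |x-y|)$, which is $\mu\,m$-integrable with total mass at most $2^p M_1 \|f\|_{L^p}^p$, and $|\chi_n(x)-\chi_n(y)| \to 0$ pointwise, so dominated convergence forces $B_n \to 0$.

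Consequently $\int \chi_n^2 \Gamma(f,g_n)\, m \to 0$. Since $g_n \nearrow f^{p-1}$ pointwise as $\varepsilon_n \searrow 0$ and $\chi_n^2 \nearrow 1$, the non-negative integrand $\chi_n^2(x)(f(x)-f(y))(g_n(x)-g_n(y))$ tends pointwise to $(f(x)-f(y))(f^{p-1}(x)-f^{p-1}(y)) \ge 0$, and Fatou's lemma gives
\[
\int \Gamma(f,f^{p-1})\, m \le \liminf_{n\to\infty} \int \chi_n^2 \Gamma(f,g_n)\, m = 0.
\]
Hence $(f(x)-f(y))(f^{p-1}(x)-f^{p-1}(y)) = 0$ for $\mu(x,dy)m(dx)$-a.e.\ $(x,y)$, which forces $f(x) = f(y)$ on the support of $\mu(x,dy)m(dx)$. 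The continuity of $f$ together with assumption~(i), which ensures that $\mu(x,\cdot)$ charges every annulus $B(x,R)\setminus B(x,r)$, then implies that $f$ is constant. The main obstacle is the construction of a test function that is simultaneously in $C_K^\gamma$ (so that $\gamma$-subharmonicity applies) and whose H\"older partner fits the $L^q$ hypothesis of Corollary~\ref{cor:deri}: the subtraction of $\varepsilon_n^{p-1}$ in $g_n$ is precisely what removes the spatially non-decaying tail of $(f+\varepsilon_n)^{p-1}$ without altering $\Gamma(f,\cdot)$, while the hypothesis $\gamma \ge 1/p$ comfortably ensures $2\gamma \ge \alpha = 1$ so that the H\"older derivation property is applicable.
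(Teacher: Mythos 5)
Your proof is correct, but it takes a genuinely different route from the paper's. The paper tests directly against $\chi_n^2 f^{p-1}$: since $t\mapsto t^{p-1}$ is $(p-1)$-H\"older on $[0,\infty)$ for $1<p<2$, one has $f^{p-1}\in C^{(p-1)\gamma}_{\rm loc}\cap L^q$, and the hypothesis $\gamma\ge 1/p$ enters precisely as $\beta_1+\beta_2=\gamma+(p-1)\gamma=p\gamma\ge 1=\alpha$ in Corollary \ref{cor:deri}; no $\varepsilon$-regularization and no limiting argument in $\varepsilon$ are needed, only the cut-off limit $n\to\infty$. You instead regularize with $g_n=(f+\varepsilon_n)^{p-1}-\varepsilon_n^{p-1}$, which is the device the paper reserves for the truncated-kernel case (Theorem \ref{theorem:truncate}); the subtraction of $\varepsilon_n^{p-1}$ to restore membership in $L^q$ (via subadditivity of $t\mapsto t^{p-1}$, giving $g_n\le f^{p-1}$ and hence $g_n^q\le f^p$) while leaving $\Gamma(f,\cdot)$ unchanged is a nice observation and is sound, as is the final Fatou passage $g_n\nearrow f^{p-1}$. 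Your route buys two things: the test function $\chi_n^2 g_n$ genuinely lies in $C_K^\gamma$ (the Lipschitz map $t\mapsto(t+\varepsilon_n)^{p-1}$ preserves the exponent $\gamma$), so the subharmonicity hypothesis with exponent $\gamma$ applies verbatim, whereas the paper's $\chi_n^2 f^{p-1}$ only lies in $C_K^{(p-1)\gamma}\supset C_K^{\gamma}$ and strictly speaking requires extending the subharmonicity inequality to that larger class; and the derivation property only needs $2\gamma\ge 1$ rather than $p\gamma\ge 1$. The price is the extra limit in $\varepsilon_n$ and a slightly longer argument. Both proofs reach the same estimate $0\le\int\chi_n^2\,\Gamma(f,\cdot)\,dm\le 2A_n^{1/q}B_n^{1/p}$ with $A_n$ uniformly bounded and $B_n\to 0$, and conclude identically.
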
 
\begin{proof}
Let $f\in C^{\gamma} \cap L^p$ be a non-negative $\form$-subharmonic 
function %insert "function"
with 
exponent $\gamma$. Take a sequence of functions 
$\{\chi_n\}\subset C_K^{\rm lip}$ as before:
\[
0 \le \chi_n \nearrow 1 \ {\rm as} \ n\to\infty \ 
{\rm and} \ |\chi_n(x)-\chi_n(y)|\le |x-y| \ \ {\rm for} \ \ x,y\in {\mathbf{R}}^d.
\]
Due to the subharmonicity of $f$, $\chi^2_n f^{p-1}\in C^{(p-1)\gamma}$, 
and the fact: $\gamma+(p-1)\gamma=\gamma p\ge 1$. we have that
\[
\form (f, \chi^2_n f^{p-1}) \le 0.
\]
Then by a similar argument in the proof of the previous theorem, using 
the derivation property (Corollary \ref{cor:deri}), it follows that
\[
0\le \int \chi^2_n(x) \Gamma(f,f^{p-1})(x)\, m(dx)
\le \Bigl| \int f^{p-1}(x) \Gamma(f,\chi_n^2)(x) \, m(dx) \Bigr|.
\]
The right-hand side of this inequality can be estimated as
\begin{align*}
& \dis{\Bigl| \int f^{p-1}(x) \Gamma(f,\chi_n^2)(x) \, m(dx) \Bigr|} \\  
\le & \dis{ \iint_{x\not=y} f^{p-1}(x) \bigl|f(x)-f(y)\bigr|
 \bigl|\chi^2_n(x)-\chi^2_n(f)\bigr| \mu(x,dy)m(dx) } \\ 
\le & \dis{ 2 \iint_{x\not=y} f^{p-1}(x) \bigl|f(x)-f(y)\bigr| 
\bigl|\chi_n(x)-\chi_n(f)\bigr| \mu(x,dy)m(dx) } \\ 
\le & \dis{ 2 \iint_{x\not=y} \left( f^{p-1}(x) 
\bigl|\chi_n(x)-\chi_n(f)\bigr|^{1/q} \right) } \\  
 & \dis{ \times \left(\bigl|f(x)-f(y)\bigr| 
\bigl|\chi_n(x)-\chi_n(f)\bigr|^{1/p} \right) \mu(x,dy)m(dx) } \\ 
\le & \dis{ 2 \left(\iint_{x\not=y} f^p(x) \bigl|\chi_n(x)-\chi_n(f)\bigr| 
\mu(x,dy)m(dx) \right)^{1/q} } \\  &  \dis{ \times 
\left(\iint_{x\not=y}  \bigl|f(x)-f(y)\bigr|^p  \bigl|\chi_n(x)-\chi_n(f) 
\bigr| \mu(x,dy)m(dx) \right)^{1/p}, } 
\end{align*}
where we used the H\"older inequality in the last inequality.  Using 
the symmetry of $ \mu(x,dy)m(dx)$ and the Fubini theorem, 
\begin{align*}
& \iint_{x\not=y} |f(x)-f(y)|^p {} 
\bigl|\chi_n(x)-\chi_n(y)\bigr| \mu(x,dy)m(dx) \\ 
& \le 2^p \iint_{x\not=y} |f(x)|^p |\chi_n(x)-\chi_n(y)\bigr| 
\mu(x,dy)m(dx) 
\end{align*}
Thus, we have:
\begin{align*}
0 & \le \int \chi_n(x)^2 \Gamma(f,f^{p-1})(x)\, m(dx) \\  
  & \le 4 \iint_{x\not=y} |f(x)|^p {} \bigl|\chi_n(x)-\chi_n(y)\bigr| 
\mu(x,dy)m(dx). 
\end{align*}
Applying the inequality:
\[
|f(x)|^p {} \bigl|\chi_n(x)-\chi_n(y)\bigr| 
 \le |f(x)|^p {} \bigl( 1\wedge |x-y| \bigr) 
\]
and taking into account that the right-hand side is integrable on 
${\mathbf{R}}^d\times{\mathbf{R}}^d \setminus D$ with respect to $\mu(x,dy)m(dx)$ 
because of the assumption $M_{1}<\infty$, the rest of the proof is 
similar to that of the preceding theorem. 
\end{proof}

%%%%%%%%%%%%%%%%%%%%%%%%%%%%%%%%
%%%%%   Section(Example)   %%%%%
%%%%%%%%%%%%%%%%%%%%%%%%%%%%%%%%
\section{Examples} \label{E}
Let $k(x,y)=k(y,x)$ be a nonnegative measurable function defined on 
$x\not=y$. Set 
\[
\mu(x,dy)=k(x,y)m(dy)=k(x,y)dy,
\]
where $m(dx)=dx$ is the Lebesgue measure on ${\mathbf{R}}^d$. The examples 
for %erase:of, insert "for" 
$\mu$ we present in the following have a density $k$, which we also call 
a kernel, and satisfy $M_2<\infty$ (see \cite{U04, U07}). 

\begin{example}[Symmetric stable-like process] 
Let $\alpha(x)$ be a measurable function defined on ${\mathbf{R}}^d$ satisfying
$\al \le \alpha (x) \le \beta$ with some constants $0<\al < \beta <2$.
The Dirichlet form $\form$ defined by the kernel $k$:
\[
\mu(x,dy)=k(x,y)dy=|x-y|^{-(d+\alpha(x))} dy
\]
corresponds to a symmetric stable-like process with variable exponent 
$\alpha(x)$. If $\al(x)=\alpha$ with $0<\alpha<2$, then the process is 
nothing but a {\em{(rotational invariant) symmetric $\alpha$-stable process}}. 
\end{example}

\begin{example}[Symmetric L\'evy processes]  
Let $\tilde{k}$ be an even positive measurable function on ${\mathbf{R}}^d$
so that 
\[
\int_{h \neq 0} \left( 1 \wedge |h|^2 \right) \tilde{k}(h)dh < \infty
\]
and set 
%erase: The kernel $k$ of a symmetric L\'evy processes is the function 
\[
k(x,y)=\tilde{k}(x-y),\quad x \neq y.
\]
Then $k$ satisfies $M_2<\infty$ and a symmetirc Hunt process associated with 
this kernel is a symmetric L\'evy process. 
%where $\tilde{k}$ is an even positive measurable function on 
%${\mathbf{R}}^d \setminus \{0\}$ satisfying: 
\end{example}

\medskip
In the last example, we examine the $L^p$-Liouville property of a 
symmetric stable process:

\begin{example}[Symmetric stable process] 
The kernel $k$ 
\[
k(x,y)=|x-y|^{-(d+\alpha)}
\]
of a symmetric $\alpha$-stable process satisfies $M_{q}<\infty$ for 
$\alpha<q\le 2$.  Let us recall that the closure $\dom$ of 
$C_K^{\rm lip}$ with respect to the norm $\sqrt{\form(\cdot, \cdot)
+||\cdot||_{L^2}^2}$, where $\form$ is the Dirichlet form associated to $k$; 
that is,
\[
\form(f,g)=\iint_{x\not=y} \frac{(f(x)-f(y))(g(x)-g(y))}{|x-y|^{d+\alpha}} 
dxdy
\]
is nothing but the fractional Sobolev space $W^{s,2}({\mathbf{R}}^d)$ of order 
$s=\alpha/2$ $($and also coincides with the Besov space 
$B^{2,2}_{\alpha/2}({\mathbf{R}}^d))$. 

We state two cases $1\le \alpha<2$ and $0<\alpha<1$ 
separately:

\begin{itemize}
\item[{\rm (i)}] $1\le \alpha<2$. If $2\le p<\alpha/(\alpha-1)$, then any 
nonnegative $\form$-subharmonic function $f \in C \cap \dom_{\rm loc} 
\cap L^p$ must be constant.

\medskip

\item[{\rm (ii)}] $0<\alpha<1$. If $2\le p<\infty$, then
$M_q\le M_1<\infty$ with the conjugate number $q$ of $p$. 
Thus any nonnegative $\form$-subharmonic $f\in C \cap L^p$ is constant. 

\smallskip
Let $1<p<2$. Assume $1 / p \le \gamma \le 1$. Then a nonnegative H\"older 
continuous $\form$-subharmonic function $f$ with exponent $\gamma$ is 
constant if $f \in L^p$.
\end{itemize}
\end{example}

%%%%%%%%%%%%%%%%%%%%%%%%%%%%%%

%\begin{acknowledgement}
  %An acknowledgement may be placed at the end of the~article.
%\end{acknowledgement}

% Use this code if you wish to generate your bibliography with BibTeX;
% please replace first the string "demo" below with the name(s) of
% the BibTeX data base(s) you want to use.
% The resulting bibliography-output (the contents of the .bbl file)
% must be pasted into this file before submission.
% 
% \bibliographystyle{mn}
% \bibliography{demo}
% 
% Replace the following example bibliography with your references
% before submission:

\end{document}